\newtheorem{theorem}{Theorem}
\newtheorem{corollary}{Corollary}
\newtheorem{proposition}{Proposition}
\newtheorem{lemma}{Lemma}
\theoremstyle{definition}
\newtheorem{definition}{Definition}
\newcommand{\cB}{\mathcal{B}}
\newcommand{\cO}{\mathcal{O}}
\newcommand{\cP}{\mathcal{P}}
\newcommand{\cS}{\mathcal{S}}
\newcommand{\RR}{\mathbb{R}}
\newcommand{\ZZ}{\mathbb{Z}}
\newcommand{\bone}{\mathbf{1}}
\newcommand*{\ep}{\varepsilon}
\DeclareMathOperator*{\argmin}{argmin}
\DeclareMathOperator*{\conv}{conv}
\newcommand*{\ent}[1]{H(#1)}
\begin{document}

\title{An explicit analysis of the entropic penalty in linear programming}
\author{Jonathan Weed\thanks{Email:~\texttt{jweed@mit.edu}}\\Department of Mathematics, Massachusetts Institute of Technology}
\date{\today}
\maketitle
\begin{abstract}
Solving linear programs by using entropic penalization has recently attracted new interest in the optimization community, since this strategy forms the basis for the fastest-known algorithms for the optimal transport problem, with many applications in modern large-scale machine learning.
Crucial to these applications has been an analysis of how quickly solutions to the penalized program approach true optima to the original linear program.
More than 20 years ago, Cominetti and San Mart\'in showed that this convergence is exponentially fast; however, their proof is asymptotic and does not give any indication of how accurately the entropic program approximates the original program for any particular choice of the penalization parameter.
We close this long-standing gap in the literature regarding entropic penalization by giving a new proof of the exponential convergence, valid for any linear program.
Our proof is non-asymptotic, yields explicit constants, and has the virtue of being extremely simple.
We provide matching lower bounds and show that the entropic approach does not lead to a near-linear time approximation scheme for the linear assignment problem.
\end{abstract}

\section{Introduction}
In 1992, \citeauthor{Fan92} initiated the study of the \emph{entropic penalty} for linear programs.
Given a basic linear program of the form
\begin{equation}\tag{LP}\label{eq:primal}
\begin{array}{rl}
\min & c^\top x  \\
\text{subject to}& Ax  = b \\
& x  \geq 0\,,
\end{array}
\end{equation}
he proposed to solve instead the penalized program
\begin{equation}\tag{Pen}\label{eq:penalized}
\begin{array}{rl}
\min & c^\top x  - \eta^{-1} \ent{x}\\
\text{subject to}& Ax  = b\,,
\end{array}
\end{equation}
where $\ent{x} := \sum_i x_i \log \frac{1}{x_i}$ is the \emph{Shannon entropy} of $x$ viewed as a probability vector and $\eta > 0$ is a penalization parameter.
The term $- \eta^{-1} \ent{x}$ plays the role of a strongly convex regularizer, which also enforces the constraint $x \geq 0$.
As $\eta \to \infty$, we recover~\eqref{eq:primal}; however, one hopes that solving~\eqref{eq:penalized} is significantly easier.

Solving linear programs via entropic penalization does not initially seem like an especially attractive choice.
Unlike the well known logarithmic penalty, the entropic penalty is not self-concordant, which makes it a poor barrier function for interior point methods~\citep{BoyVan04}.
Nevertheless, the entropic penalty has been applied to various linear and nonlinear problems with empirical success~\citep{FanRajTsa97}.
It is also notable for its connection to other fields.
In statistics, it has been used as a tool for model selection and aggregation~\citep{JudRigTsy08,RigTsy11} and is intimately related to the maximum entropy principle for statistical inference~\citep{Jay82} and to maximum likelihood estimation~\citep{ChrHer00}.
The entropic penalty is also closely connected to first-order optimization methods such as mirror descent~\citep{Bub15} and to online learning algorithms for combinatorially structured problems~\citep{FreSch97,CesLug06,HelWar09,KooWarKiv10,AudBubLug13}.

The recent resurgence of interest in the entropic penalty in the machine-learning community has been driven by the fact that it can be used to obtain state-of-the-art methods for the \emph{optimal transport problem}~\citep{Cut13,CutDou14,SolDe-Pey15,GenCutPey16,BenCarNen16,AltWeeRig17}.
The use of the entropic penalty for such problems dates back to \citet{Sch31}~\citep[see][]{Leo14} and to \citet{Bre67}, who noted the connection between the entropic penalty and the computation of a projection onto the feasible set $\{x: Ax = b, x \geq 0\}$ with respect to the generalized Kullbeck-Leibler divergence.

What makes this penalty especially useful for transport problems is that the solution to~\eqref{eq:penalized} can be computed quickly by a simple iterative algorithm known as the \emph{Sinkhorn} or \emph{RAS} algorithm~\citep{Sin67}.
This fact was popularized by~\citet{Cut13}, and his work led to widespread adoption of the entropic penalty for computing optimal transport.
The introduction of the entropic penalty makes an enormous difference in practice:
since optimal transport can be formulated as a linear program, it can, of course, be solved in polynomial time, but the numerical experiments conducted by~\citet{Cut13} indicate that the same linear program with an entropic penalty can be solved up to 10,000 times faster, as long as $\eta$ is not too large.
On the other hand, those experiments also showed that solving the penalized program becomes costly as $\eta$ increases.

This same phenomenon is present in theory as well as in practice.
A recent theoretical analysis~\citep[see][]{AltWeeRig17} of the method of~\citet{Cut13} suggests that the time required to solve the optimal transport problem via entropic regularization scales linearly with $\eta$.
Even the guarantees of the most recent algorithms for solving~\eqref{eq:penalized} decay as $\eta$ grows.
For example, when $\{Ax = b, x \geq 0\}$ defines the Birkhoff polytope~\citep{Bru06} of $n \times n$ doubly stochastic matrices, an approximate solution to~\eqref{eq:penalized} can be found in time%
\footnote{The notation $\tilde O(\cdot)$ hides polylogarithmic factors.}
$\tilde O(n^2 \|c\|_\infty \eta)$~\citep{CohMadTsi17,AllLiOli17}.
In particular, when $\eta = \tilde O(1)$, the algorithms of~\citet{CohMadTsi17} and~\citet{AllLiOli17} run in time which is \emph{nearly linear} in the size of the input.

Nevertheless, if we wish to obtain a good approximation to the solution of~\eqref{eq:primal}, $\eta$ cannot be taken too small.
In the $\eta \to 0$ limit, the solution to~\eqref{eq:penalized} converges to the maximum-entropy point in the feasible set, which may be far from the optimum.
If the goal is to approximately solve~\eqref{eq:primal}, then $\eta$ must be large enough that the solution of~\eqref{eq:penalized} is still close to an optimum of~\eqref{eq:primal}.

To summarize: the choice of $\eta$ is essential.
Too large, and the computational benefits of using the regularizer disappear;
too small, and the entropic term induces significant bias and~\eqref{eq:penalized} is a poor approximation to the original problem.
The chief aim of this work is to quantify this trade-off.

\subsection{Prior work}
The question of how well~\eqref{eq:penalized} approximates~\eqref{eq:primal} as a function of $\eta$ was studied by \citet{ComSan94}.%
\footnote{In fact, the main object of study of~\citet{ComSan94} is the program $\max_y \left\{b^\top y - \eta^{-1} \sum_i e^{- \eta (c_i - (A^\top y)_i)}\right\}$,
which is the dual of~\eqref{eq:penalized} with the entropy replaced by the similar function $\bar{H}(x) := \sum_{i} x_i \left(\log \frac{1}{x_i} + 1\right)$.
They refer to the penalty appearing in this dual program as the \emph{exponential penalty}.
Our analysis applies equally well to their setting, but we focus on the vanilla entropic penalty for clarity.}
They showed that, under mild conditions, the optimal solution $x^\eta$ to~\eqref{eq:penalized} approaches an optimal solution $x^*$ to~\eqref{eq:primal} exponentially fast, in the sense that
\begin{equation}\label{eq:linear_rate}
\lim_{\eta \to \infty} \frac{\|x^\eta - x^*\|}{\exp(- M \eta)} = 0
\end{equation}
for some $M > 0$.
However, their proof does not make it easy to determine the order of magnitude of $M$---and, in particular, its dependence on problem-specific quantities such as the dimension and size of the feasible set $\{x : Ax = b, x \geq 0\}$.
This result is nevertheless tantalizing, insofar as it suggests that $x^\eta$ will be close to $x^*$ even for relatively small values of $\eta$.
Of course, knowing the size of $M$ is crucial to making this idea precise.
Prior to this work, theirs was the most general analysis of~\eqref{eq:penalized} available.

\subsection{Our contribution}
In this work, motivated by the recent popularity of entropic penalization for optimal transport, we prove a version of~\eqref{eq:linear_rate} with easy-to-understand constants. Our analysis applies to any linear program of the form~\eqref{eq:primal}.
We show (Section~\ref{sec:upper}) that the quality of the penalized solution $x^\eta$ satisfies
\begin{equation*}
c^\top x^\eta - \min_{x \in \cP} c^\top x \leq \Delta \exp\left(- \eta \frac{\Delta}{R_1} + \frac{R_1 + R_H}{R_1}\right)\,,
\end{equation*}
where $\Delta$ is the gap in objective value between an optimal vertex and any suboptimal vertex, and $R_1$ and $R_H$ are the radius of the feasible set with respect to the $\ell_1$ norm and the entropy, respectively.
As a corollary, we obtain that the result~\eqref{eq:linear_rate} obtained by~\citet{ComSan94} holds for any $M < \Delta/R_1$.
In addition to making explicit their result, our proof has the virtue of being very simple, requiring only elementary facts about entropy.
Moreover, we show (Section~\ref{sec:lower}) that no general improvement in the dependence on $\Delta$, $R_1$, or $R_H$ is possible, even for the simplest possible example, where the feasible set is the probability simplex.

Finally, specializing to the Birkhoff polytope (Section~\ref{sec:assignment}), we obtain nearly matching upper and lower bounds on the quality of the solution as a function of $\eta$.
In particular, these imply that $\eta$ cannot be taken to be $\tilde O(1)$, so that the entropic penalty is not a magic bullet for the assignment problem.

\subsection{Assumptions}\label{sec:assumptions}
We assume throughout that $\cP : = \{x: A x = b, x \geq 0\}$ is bounded.
To ensure that~\eqref{eq:primal} is nontrivial, we assume that $\cP$ is nonempty and that $c^\top x$ is not constant over $\cP$.

\subsection{Quantities of interest}
For convenience, we collect here definitions of the three quantities $\Delta$, $R_1$, and $R_H$ appearing in our bounds.

\begin{definition}
Let $V$ be the set of vertices of $\cP$. The \emph{suboptimality gap} $\Delta$ is
\begin{equation*}
\min_{v \in \cS} c^\top (v - v^*)\,,
\end{equation*}
where $v^* \in \argmin_{v \in V} c^\top v$ and $\cS : = \{v \in V: c^\top v > c^\top v^*$\}.
\end{definition}

\begin{definition}
The \emph{$\ell_1$ radius $R_1$} of $\cP$ is $\max_{x \in \cP} \|x\|_1$.
\end{definition}

\begin{definition}
The \emph{entropic radius $R_H$} of $\cP$ is $\max_{x, y \in \cP} \ent{x} - \ent{y}$.
\end{definition}

\section{Upper bound}\label{sec:upper}
In this section, we prove our main bound on the quality of the solution of~\eqref{eq:penalized}.
Our proof recovers the result of~\citet{ComSan94} that the penalized solution approaches an optimal solution exponentially fast.
Before doing so, however, we first prove a much simpler and weaker bound, which we call the \emph{slow rate}~\citep[see][where this analysis also appeared]{FanTsa93,AltWeeRig17}:
\begin{proposition}[Slow rate]\label{prop:slow_rate}
For all $\eta > 0$,
\begin{equation*}
c^\top x^\eta - \min_{x \in \cP} c^\top x \leq \eta^{-1} (\ent{x^\eta} - \ent{x^*}) \leq \eta^{-1} R_H\,.
\end{equation*}
\end{proposition}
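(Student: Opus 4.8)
The plan is to exploit the fact that $x^\eta$ is the minimizer of the penalized objective $f_\eta(x) := c^\top x - \eta^{-1}\ent{x}$ over $\cP$, while $x^*$ is merely feasible for this program (since $x^* \in \cP$). Optimality of $x^\eta$ immediately gives the inequality $f_\eta(x^\eta) \le f_\eta(x^*)$, i.e.
\begin{equation*}
c^\top x^\eta - \eta^{-1}\ent{x^\eta} \le c^\top x^* - \eta^{-1}\ent{x^*}\,.
\end{equation*}
Rearranging this yields $c^\top x^\eta - c^\top x^* \le \eta^{-1}(\ent{x^\eta} - \ent{x^*})$, which is the first claimed inequality once we note that $c^\top x^* = \min_{x\in\cP} c^\top x$.

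For the second inequality I would simply invoke the definition of the entropic radius: since both $x^\eta$ and $x^*$ lie in $\cP$, we have $\ent{x^\eta} - \ent{x^*} \le \max_{x,y\in\cP}(\ent{x}-\ent{y}) = R_H$. Dividing through by $\eta > 0$ preserves the inequality, giving $\eta^{-1}(\ent{x^\eta}-\ent{x^*}) \le \eta^{-1} R_H$.

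There is essentially no obstacle here; the only subtlety worth a remark is ensuring that $x^\eta$ is well-defined, which follows because $\cP$ is compact and nonempty (by the standing assumptions) and $f_\eta$ is continuous on $\cP$ — note that $\ent{x}$ extends continuously to the boundary of the positive orthant with the convention $0\log\tfrac1{0}=0$, so the minimum is attained. One might also observe that strong convexity of $-\ent{\cdot}$ on the simplex makes $x^\eta$ unique, though uniqueness is not needed for the statement. The entire argument is a one-line consequence of optimality plus the definition of $R_H$, which is exactly why it is called the \emph{slow rate}: it does not use any structure of $\cP$ beyond its diameter in entropy, and in particular gives only the trivial $O(\eta^{-1})$ decay rather than the exponential rate proved later.
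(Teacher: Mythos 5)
Your proof is correct and follows exactly the same route as the paper's: compare the penalized objective at $x^\eta$ and $x^*$ using optimality of $x^\eta$, rearrange, and bound the entropy difference by $R_H$. The extra remarks on well-definedness and uniqueness of $x^\eta$ are accurate but not needed for the statement.
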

\begin{proof}
Denote by $x^*$ an optimal solution to~\eqref{eq:primal}.
Since $x^*$ and $x^\eta$ both lie in $\cP$ and $x^\eta$ is an optimal solution to~\eqref{eq:penalized},
\begin{equation*}
c^\top x^\eta - \eta^{-1} \ent{x^\eta} \leq c^\top x^* - \eta^{-1} \ent{x^*}\,.
\end{equation*}
The claim follows.
\end{proof}

Note that the slow rate is much worse than the fast rate we hope to prove; however, \citet{Rig17} noted that the slow rate is actually tight for an infinite-dimensional analogue of~\eqref{eq:primal}.
This indicates that the reason that a fast rate obtains for~\eqref{eq:primal} is that the finite-dimensional problem exhibits a \emph{suboptimality gap} \citep[known as an energy gap in the statistical physics literature; see][]{MezMon09}.
Intuitively, the slow rate dominates the convergence until $\eta$ is large enough that $x^\eta$ is concentrated near enough to the optimal solution; after this point, convergence occurs exponentially fast.
We will return to this point in Section~\ref{sec:lower}.

We now turn to the main result.
\begin{theorem}[Fast rate]\label{thm:main}
If $\eta \geq \frac{R_1 + R_H}{\Delta}$, then the optimal solution $x^\eta$ of~\eqref{eq:penalized} satisfies
\begin{equation*}
c^\top x^\eta - \min_{x \in \cP} c^\top x \leq \Delta \exp\Big(-\eta \frac{\Delta}{R_1} + \frac{R_1 + R_H}{R_1}\Big)\,.
\end{equation*}
\end{theorem}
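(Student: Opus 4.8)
The plan is to exploit the variational characterization of $x^\eta$ together with the suboptimality gap $\Delta$ to show that $x^\eta$ puts almost all of its mass on the face of $\cP$ spanned by optimal vertices. Concretely, write $x^\eta$ as a convex combination of vertices of $\cP$, say $x^\eta = \sum_{v \in V} \lambda_v v$, and let $\mu := \sum_{v \in \cS} \lambda_v$ be the total weight on strictly suboptimal vertices. By the definition of $\Delta$ we have $c^\top x^\eta - \min_{x \in \cP} c^\top x \ge$ (something controlled from below), and more usefully $c^\top x^\eta - c^\top v^* \le \mu \cdot (\text{max gap})$ on one side and $\ge \mu \Delta$ is not quite right since the optimal face contributes zero—so in fact $c^\top x^\eta - \min_{x\in\cP} c^\top x = \sum_{v} \lambda_v c^\top(v - v^*) \le \mu \cdot \max_v c^\top(v-v^*)$. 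The real content is to bound $\mu$ itself by $\exp(-\eta \Delta / R_1 + \text{const})$.

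The key step is therefore a comparison argument: I would compare $x^\eta$ against a competitor $\tilde x \in \cP$ obtained by "pushing" the suboptimal mass onto the optimal face, e.g. $\tilde x = (1-\mu) \cdot (\text{renormalized optimal part of } x^\eta) + \mu v^*$, or even more simply a suitable interpolation between $x^\eta$ and $v^*$. Optimality of $x^\eta$ for~\eqref{eq:penalized} gives
\begin{equation*}
c^\top x^\eta - \eta^{-1} \ent{x^\eta} \le c^\top \tilde x - \eta^{-1} \ent{\tilde x}\,,
\end{equation*}
so that $\eta\,(c^\top x^\eta - c^\top \tilde x) \le \ent{x^\eta} - \ent{\tilde x}$. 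The left-hand side is at least $\eta \mu \Delta$ (moving mass $\mu$ off suboptimal vertices decreases the objective by at least $\mu\Delta$, by definition of $\Delta$), while the right-hand side, being a difference of entropies of points in $\cP$, is crudely at most $R_H$—but that only recovers the slow rate. To get the exponential rate I need a sharper bound on $\ent{x^\eta} - \ent{\tilde x}$ that sees the smallness of $\mu$: using concavity of $\ent{\cdot}$ and the fact that $\tilde x$ agrees with $x^\eta$ up to moving a fraction $\mu$ of the mass, one gets $\ent{x^\eta} - \ent{\tilde x} \le \mu \log(1/\mu) + \mu\cdot(\text{bounded term})$, where the bounded term is controlled by $R_H$ and the $\ell_1$ radius $R_1$ (the latter entering because entropy differences of subprobability vectors are controlled by their total mass times a log of the coordinates, and mass is bounded by $R_1$). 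Combining, $\eta \mu \Delta \le \mu \log(1/\mu) + \mu(R_1 + R_H)/R_1 \cdot(\dots)$—dividing by $\mu$ and rearranging yields $\log(1/\mu) \ge \eta\Delta - (R_1+R_H)$ up to the right normalization, i.e. $\mu \le \exp(-\eta\Delta + R_1 + R_H)$; after dividing the exponent through by $R_1$ (which is where the $\Delta/R_1$ and $(R_1+R_H)/R_1$ terms come from) and using $c^\top x^\eta - \min_{x\in\cP} c^\top x \le \mu \cdot \max_v c^\top(v - v^*) \le \mu \cdot (\text{something} \le \Delta \cdot \dots)$, wait—the clean statement has the prefactor $\Delta$, which suggests the max gap should be absorbed differently; I would instead bound $c^\top x^\eta - \min c^\top x$ directly in terms of $\mu$ and then note the hypothesis $\eta \ge (R_1+R_H)/\Delta$ guarantees $\mu \le 1$ so the bound is meaningful, and track constants so the prefactor comes out to exactly $\Delta$.

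The main obstacle I anticipate is the entropy estimate: bounding $\ent{x^\eta} - \ent{\tilde x}$ by something of the form $(\text{small mass})\cdot\log(1/\text{small mass}) + (\text{small mass})\cdot\text{const}$ rather than the trivial $R_H$. This requires choosing the competitor $\tilde x$ carefully (so that it differs from $x^\eta$ only by a mass-$\mu$ perturbation supported on a controlled region) and then using the right concavity/finite-difference inequality for Shannon entropy—essentially that $\ent{(1-\mu)u + \mu w} \ge (1-\mu)\ent{u} + \mu\ent{w}$ together with a bound on $\ent{w} - \ent{u}$ in terms of $R_1$ and $R_H$. Once that inequality is in hand, the rest is a one-line rearrangement, and the hypothesis $\eta \ge (R_1+R_H)/\Delta$ is exactly what is needed for the resulting bound on $\mu$ to be at most $1$ (so that the exponential bound is not vacuous). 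The fact that the proof "requires only elementary facts about entropy," as advertised, makes me fairly confident this is the intended route.
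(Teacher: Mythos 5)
Your proposal follows the paper's route: the same vertex decomposition with weight $\mu$ on the suboptimal vertices, the same slow-rate comparison against the renormalized optimal part, and you correctly identify that the crux is sharpening the entropy estimate to something of order $\mu\log(1/\mu)$ rather than the crude $R_H$. However, two concrete pieces are missing. First, the entropy estimate you gesture at is exactly the paper's Lemma~\ref{lem:weak_convexity} (weak convexity of entropy): for nonnegative vectors $x,y$ and $\lambda\in[0,1]$,
\[
H(\lambda x + (1-\lambda)y) \le \lambda H(x) + (1-\lambda) H(y) + \max\{\|x\|_1,\|y\|_1\}\,h(\lambda)\,.
\]
Applied to $x^\eta = (1-\mu)x^* + \mu\tilde x$ this yields $H(x^\eta) - H(x^*) \le R_1\,h(\mu) + R_H\,\mu$, which is where $R_1$ enters---as a multiplier of the binary entropy $h(\mu)$, not inside a logarithm.

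Second, and this is the real obstacle you flag but do not resolve: bounding $\mu$ and then multiplying by $\max_v c^\top(v-v^*)$ gives a prefactor that can be much larger than $\Delta$. The paper sidesteps this by never isolating $\mu$. Writing $g(\eta) := c^\top(x^\eta - x^*)$, one has $\mu \le g(\eta)/\Delta$ from the trivial lower bound $g(\eta)\ge\mu\Delta$, and the slow rate together with the hypothesis on $\eta$ gives $g(\eta)/\Delta \le R_H/(R_1+R_H) < 1$. Since $\lambda \mapsto R_1 h(\lambda) + R_H\lambda$ is \emph{increasing} on $[0, R_H/(R_1+R_H)]$ (Lemma~\ref{lem:monotone}), one may replace $\mu$ by the larger quantity $g(\eta)/\Delta$ in the entropy upper bound, obtaining a fixed-point inequality directly in $\rho := g(\eta)/\Delta$:
\[
(\eta\Delta - R_H)\,\rho \le R_1\, h(\rho)\,.
\]
Together with the elementary bound $h(\rho)/\rho \le \log(1/\rho) + 1$ (Lemma~\ref{lem:binary_entropy_bound}), this gives $\rho \le \exp\!\big(-\eta\Delta/R_1 + (R_1+R_H)/R_1\big)$, and multiplying by $\Delta$ yields the theorem with the correct prefactor. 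Without the monotonicity step, your bound applies to $\mu$ rather than $\rho$, and converting back to the objective gap costs you a factor of the maximal vertex gap instead of $\Delta$.
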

Theorem~\ref{thm:main} implies a bound on the size of $\eta$ required to obtain a solution of desired accuracy: to obtain a solution $x^\eta$ satisfying $c^\top x^\eta - \min_{x \in \cP} c^\top x \leq \ep$, it suffices to take $\eta = \left(\frac{R_1}{\Delta}\log \frac{\Delta}{\ep}\right)_+ + \frac{R_1 + R_H}{\Delta}$, where $(x)_+ := \max\{x, 0\}$.

Note that Theorem~\ref{thm:main} only holds for $\eta$ sufficiently large.
The requirement that $\eta \geq \frac{R_1 + R_H}{\Delta}$ corresponds exactly to the requirement that the exponent appearing on the right side of the above equation is nonpositive.
In Section~\ref{sec:lower}, we show that this restriction is necessary, in the sense that there are penalized linear programs for which $x^\eta$ does not make appreciable progress towards the minimizer until $\eta = \Omega(\frac{R_1 + R_H}{\Delta})$.

The proof of Theorem~\ref{thm:main} is elementary and relies on three simple lemmas about the entropy function, which we now state.
These lemmas are easy to verify; proofs appear in Section~\ref{sec:lemmas}.
Recall the definition of the binary entropy function:
\begin{equation*}
h(\lambda) = \lambda \log \frac 1 \lambda + (1-\lambda) \log \frac{1}{1-\lambda} \quad \quad \forall \lambda \in [0, 1]\,.
\end{equation*}

\begin{lemma}\label{lem:weak_convexity}
If $x$ and $y$ are nonnegative vectors and $\lambda \in [0, 1]$, then
\begin{equation*}
\ent{\lambda x + (1-\lambda) y} \leq \lambda \ent{x} + (1-\lambda) \ent{y} + \max\{\|x\|_1, \|y\|_1\} \cdot h(\lambda)\,.
\end{equation*}
\end{lemma}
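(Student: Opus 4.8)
The plan is to reduce everything to the scalar statement, since the entropy $H$ decomposes coordinatewise. Write $H(z) = \sum_i \phi(z_i)$ where $\phi(t) = t\log(1/t)$ for $t > 0$ and $\phi(0) = 0$. Then the claimed inequality is equivalent to showing, for each coordinate $i$,
\[
\phi(\lambda x_i + (1-\lambda) y_i) \leq \lambda \phi(x_i) + (1-\lambda)\phi(y_i) + \max\{\|x\|_1,\|y\|_1\}\cdot h(\lambda)\,,
\]
and then summing over $i$. In fact it will be cleaner to prove the sharper coordinatewise bound with $\max\{\|x\|_1,\|y\|_1\}$ replaced by $\max\{x_i, y_i\}$, i.e.
\[
\phi(\lambda a + (1-\lambda) b) \leq \lambda \phi(a) + (1-\lambda)\phi(b) + \max\{a,b\}\cdot h(\lambda) \qquad \forall a,b \geq 0\,,
\]
and then observe that $\sum_i \max\{x_i,y_i\} \leq \sum_i (x_i + y_i) = \|x\|_1 + \|y\|_1$; but actually one gets the stated form more directly by noting $\max\{x_i,y_i\} \le \max\{\|x\|_1,\|y\|_1\}$ only if... hmm, that's false in general. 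So I will instead keep $\max\{a,b\}$ on the right coordinatewise, sum to get $\sum_i \max\{x_i,y_i\}\, h(\lambda)$, and then bound $\sum_i \max\{x_i, y_i\} \le \|x\|_1 + \|y\|_1$? That gives a factor $2$. To land exactly on $\max\{\|x\|_1,\|y\|_1\}$, the right move is: WLOG $\|x\|_1 \ge \|y\|_1$, scale so that $y$ has the same $\ell_1$ norm as $x$ by writing $y = \mu y'$ where $\|y'\|_1 = \|x\|_1$ and $\mu \le 1$, absorb $\mu$ using homogeneity-type identities for $\phi$, and reduce to the case $\|x\|_1 = \|y\|_1 = R$; then rescale by $R$ to reduce to $x,y$ being probability vectors, where the clean statement is $H(\lambda x + (1-\lambda)y) \le \lambda H(x) + (1-\lambda)H(y) + h(\lambda)$.

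So the cleanest route: \textbf{Step 1.} Prove the probability-vector case. Here $\lambda x + (1-\lambda) y$ is the distribution of a mixture: let $B$ be a Bernoulli$(\lambda)$ random variable and let $Z$ be drawn from $x$ if $B=1$ and from $y$ if $B=0$. Then $H(Z \mid B) = \lambda H(x) + (1-\lambda)H(y)$, while $H(Z) = H(\lambda x + (1-\lambda)y)$, and $H(Z) \le H(Z, B) = H(B) + H(Z\mid B) = h(\lambda) + \lambda H(x) + (1-\lambda)H(y)$. That is exactly the inequality with constant $h(\lambda)$. \textbf{Step 2.} Pass to the case $\|x\|_1 = \|y\|_1 = R$ by the scaling identity $\phi(Rt) = R\phi(t) + Rt\log(1/R)$, hence $H(Rv) = R\,H(v) + R\|v\|_1 \log(1/R)$ for a probability vector $v$; apply Step 1 to $v = x/R$, $w = y/R$ and multiply through by $R$, checking that the $\log(1/R)$ terms cancel because $\|x\|_1 = \|y\|_1 = \|\lambda x + (1-\lambda)y\|_1 = R$. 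This yields the bound with constant $R\cdot h(\lambda) = \|x\|_1 h(\lambda)$. \textbf{Step 3.} Remove the equal-norm assumption. Assume WLOG $\|y\|_1 \le \|x\|_1 =: R$. This is where the only real work is. One clean argument: the function $\phi$ is concave, $\phi(0)=0$, so $\phi$ is subadditive and moreover $t \mapsto \phi(t)$ satisfies $\phi(s+t) \le \phi(s) + \phi(t)$; using this, pad $y$ by adding mass to make it norm $R$ without decreasing the right-hand side by too much — concretely, let $\tilde y = y + (R - \|y\|_1)e_j$ for a coordinate $j$ chosen so that... actually the monotonicity needed is delicate because $\phi$ is not monotone.

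\textbf{The expected main obstacle} is precisely Step 3: getting the sharp constant $\max\{\|x\|_1,\|y\|_1\}$ rather than $\|x\|_1 + \|y\|_1$ when the two vectors have different $\ell_1$ norms. I anticipate the author avoids this subtlety entirely by working coordinatewise with the scalar inequality $\phi(\lambda a + (1-\lambda)b) \le \lambda\phi(a) + (1-\lambda)\phi(b) + \max\{a,b\}h(\lambda)$: fix $a \ge b \ge 0$, factor out $a$ to reduce to $b/a \in [0,1]$, i.e. prove $\phi(\lambda + (1-\lambda)s) \le \lambda\phi(1) + (1-\lambda)\phi(s) + h(\lambda)$ for $s \in [0,1]$ (using $\phi(1)=0$, this is $\phi(\lambda + (1-\lambda)s) \le (1-\lambda)\phi(s) + h(\lambda)$), which is a one-variable calculus fact, and then sum over coordinates after replacing $\max\{x_i, y_i\}$ by the uniform bound $\max\{\|x\|_1, \|y\|_1\}$ in each term — wait, that replacement is again not valid termwise. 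I think the honest resolution, and the one I would write, is the entropy/mixture argument of Steps 1–2 for the equal-norm case, combined in Step 3 with the observation that one may first \emph{truncate or rescale the smaller vector up} to norm $R$: set $y' = (R/\|y\|_1)\,y$ if $\|y\|_1 > 0$ (and handle $y = 0$ separately, where the inequality reads $\phi(\lambda)$-type and is immediate), note $H(y') = (R/\|y\|_1)H(y) + (R/\|y\|_1)\|y\|_1\log(\|y\|_1/R) \ge$ ... — this inflates $H(y)$ in an uncontrolled way. So ultimately I believe the slickest correct proof keeps the $\max\{a,b\}$ \emph{inside the sum}, proves the scalar inequality by one-variable calculus (checking the second derivative in $\lambda$, or using that $\phi$ is concave and comparing with the secant line), sums to obtain the constant $\sum_i \max\{x_i,y_i\}$, and then \emph{only at the very end} bounds $\sum_i\max\{x_i,y_i\} \le \|x\|_1 + \|y\|_1$ — but since the lemma as stated has $\max$ not sum, the author must be using the mixture argument. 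I would present the mixture argument for the probability case, the scaling reduction, and dispatch the unequal-norm case by the substitution $\lambda' = \lambda\|x\|_1/(\lambda\|x\|_1 + (1-\lambda)\|y\|_1)$, writing $\lambda x + (1-\lambda)y = (\lambda\|x\|_1 + (1-\lambda)\|y\|_1)(\lambda' \hat x + (1-\lambda')\hat y)$ with $\hat x = x/\|x\|_1$, $\hat y = y/\|y\|_1$ unit vectors, applying the equal-norm (indeed probability) case to $\hat x, \hat y$, expanding via the scaling identity, and bounding the resulting entropy term $h(\lambda')$ and the cross terms by $\max\{\|x\|_1,\|y\|_1\}\,h(\lambda)$ using monotonicity of $h$ and the fact that $|\lambda' - \lambda|$ stays controlled — that last bound, $h(\lambda') \cdot (\text{norm factor}) + \text{log terms} \le \max\{\|x\|_1,\|y\|_1\} h(\lambda)$, is the one genuinely calculational point and is where I would spend my care.
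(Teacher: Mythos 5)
Your overall route (mixture/chain-rule entropy argument for probability vectors, then a rescaling and a $\lambda'$-substitution for the general case) is genuinely different from, and considerably longer than, the paper's proof, and as written it has a real gap at exactly the step you flag as the crux. The paper never reduces to the probability-vector case at all; it argues in one shot by splitting
\[
\ent{\lambda x + \bar\lambda y} \;=\; \sum_i \lambda x_i \log\frac{1}{\lambda x_i + \bar\lambda y_i} \;+\; \sum_i \bar\lambda y_i \log\frac{1}{\lambda x_i + \bar\lambda y_i}\,,
\]
bounding each log by $\log\frac{1}{\lambda x_i}$ (resp.\ $\log\frac{1}{\bar\lambda y_i}$) via monotonicity of the logarithm, and collecting. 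This produces the residual $\lambda\log\frac{1}{\lambda}\,\|x\|_1 + \bar\lambda\log\frac{1}{\bar\lambda}\,\|y\|_1 \le R\,h(\lambda)$ in a handful of lines, with no case analysis on norms and no auxiliary mixing weight.

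Concerning your Step~3: the plan does in fact work, but not for the reason you sketch, and you never carry it out. Setting $p=\|x\|_1$, $q=\|y\|_1$, $s=\lambda p+(1-\lambda)q$, $\lambda'=\lambda p/s$, $\hat x=x/p$, $\hat y=y/q$, apply the probability-vector case to $\hat x,\hat y$ with weight $\lambda'$, rescale by $s$, and convert $\ent{\hat x},\ent{\hat y}$ back to $\ent{x},\ent{y}$ via $\ent{cv}=c\ent{v}+c\|v\|_1\log\frac{1}{c}$. All the $\log p$, $\log q$, $\log s$, and $s\,h(\lambda')$ terms cancel exactly, leaving precisely
\[
\lambda p\log\tfrac{1}{\lambda}+(1-\lambda)q\log\tfrac{1}{1-\lambda}\;\le\;\max\{p,q\}\,h(\lambda)\,.
\]
So the resolution is an \emph{identity}, not an estimate: no monotonicity of $h$ and no control of $|\lambda'-\lambda|$ is needed or even relevant, and the hand-wave you offer in that direction would not yield a proof. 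Note also that what you obtain by this detour is exactly the same residual the paper gets directly, so passing through the probability case buys nothing here beyond the pleasant chain-rule interpretation of the $R=1$ case. Finally, your instinct that the coordinatewise bound with $\max\{a,b\}$ cannot be summed to $\max\{\|x\|_1,\|y\|_1\}$ is correct; the useful coordinatewise statement is the \emph{asymmetric} one $\phi(\lambda a+\bar\lambda b)\le\lambda\phi(a)+\bar\lambda\phi(b)+a\,\lambda\log\frac{1}{\lambda}+b\,\bar\lambda\log\frac{1}{\bar\lambda}$, whose $a$- and $b$-coefficients sum separately to $\|x\|_1$ and $\|y\|_1$, and that is the one the paper implicitly uses.
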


\begin{lemma}\label{lem:monotone}
The function $f(\lambda) := \alpha h(\lambda) + \beta \lambda$ is increasing on the interval $[0, \frac{\beta}{\alpha + \beta}]$.
\end{lemma}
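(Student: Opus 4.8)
The plan is to reduce the claim to a single scalar inequality by differentiating. First I would dispose of the degenerate cases: if $\beta = 0$ the interval $[0, \frac{\beta}{\alpha+\beta}]$ collapses to the point $\{0\}$ and there is nothing to show, while if $\alpha = 0$ then $f(\lambda) = \beta\lambda$ is manifestly nondecreasing; so I may assume $\alpha, \beta > 0$. A one-line differentiation of $h(\lambda) = -\lambda\log\lambda - (1-\lambda)\log(1-\lambda)$ gives $h'(\lambda) = \log\frac{1-\lambda}{\lambda}$ for $\lambda \in (0,1)$, so that $f'(\lambda) = \alpha\log\frac{1-\lambda}{\lambda} + \beta$ on $(0,1)$.

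The key structural observation is that $\lambda \mapsto \frac{1-\lambda}{\lambda}$ is strictly decreasing on $(0,1)$, so (as $\alpha > 0$) $f'$ is strictly decreasing there. Hence, writing $\lambda^\star := \frac{\beta}{\alpha+\beta} \in (0,1)$, it suffices to verify the single inequality $f'(\lambda^\star) \ge 0$: monotonicity of $f'$ then gives $f'(\lambda) \ge f'(\lambda^\star) \ge 0$ for every $\lambda \in (0, \lambda^\star]$, and continuity of $f$ on $[0,\lambda^\star]$ upgrades this to $f$ being increasing on the closed interval. To check the endpoint inequality, note $1 - \lambda^\star = \frac{\alpha}{\alpha+\beta}$, so $\frac{1-\lambda^\star}{\lambda^\star} = \frac{\alpha}{\beta}$ and therefore $f'(\lambda^\star) = \alpha\log\frac{\alpha}{\beta} + \beta$. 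Applying the elementary bound $\log t \le t - 1$ (valid for $t > 0$) with $t = \beta/\alpha$ yields $\alpha\log\frac{\alpha}{\beta} = -\alpha\log\frac{\beta}{\alpha} \ge -\alpha\bigl(\tfrac{\beta}{\alpha} - 1\bigr) = \alpha - \beta$, whence $f'(\lambda^\star) \ge (\alpha - \beta) + \beta = \alpha > 0$, as required.

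I do not anticipate any genuine obstacle; the whole argument is a short calculus exercise. The only two points that warrant a moment's care are (i) the reduction to evaluating $f'$ only at the right endpoint, which hinges on $f'$ being monotone so that its sign on $[0,\lambda^\star]$ is controlled by its value at $\lambda^\star$, and (ii) the endpoint inequality itself, which becomes transparent once one invokes $\log t \le t-1$. One should also keep in mind that $f'(\lambda) \to +\infty$ as $\lambda \to 0^+$, so differentiability is only claimed on the open interval, with the closed-interval conclusion following from continuity of $f$.
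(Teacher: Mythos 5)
Your argument is correct and follows essentially the same route as the paper: differentiate to get $f'(\lambda) = \alpha\log\frac{1-\lambda}{\lambda} + \beta$, reduce via monotonicity of $\log\frac{1-\lambda}{\lambda}$ to the value at the right endpoint $\lambda^\star = \frac{\beta}{\alpha+\beta}$, and close with the elementary bound $\log t \le t-1$ (the paper uses the slightly weaker $\log t < t$). Your handling of the degenerate cases $\alpha = 0$ or $\beta = 0$ and the use of the sharper inequality (giving $f'(\lambda^\star) \ge \alpha$) are minor refinements but not a different approach.
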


\begin{lemma}\label{lem:binary_entropy_bound}
If $0 \leq \rho \leq 1$, then
\begin{equation*}
\frac{h(\rho)}{\rho} \leq \log \frac 1 \rho + 1\,.
\end{equation*}
\end{lemma}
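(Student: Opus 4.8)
The plan is to expand the left-hand side using the definition of the binary entropy and reduce the claim to the elementary inequality $\log t \le t-1$, valid for all $t > 0$. It is enough to treat $\rho \in (0,1)$: at $\rho = 1$ the left-hand side is $0 \le 1$, and as $\rho \to 0^+$ one has $h(\rho)/\rho \to \log\frac1\rho + 1$ (using $\log\frac{1}{1-\rho} = \rho + O(\rho^2)$), so the inequality holds in the limiting sense with equality, and this is the only point requiring the convention $0\log\frac10 = 0$.

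First I would write, for $\rho \in (0,1)$,
\begin{equation*}
\frac{h(\rho)}{\rho} = \log\frac1\rho + \frac{1-\rho}{\rho}\log\frac{1}{1-\rho}\,,
\end{equation*}
directly from the definition $h(\rho) = \rho\log\frac1\rho + (1-\rho)\log\frac{1}{1-\rho}$. Comparing with the target, it then suffices to show that the second term is at most $1$, i.e. that $\frac{1-\rho}{\rho}\log\frac{1}{1-\rho} \le 1$.

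Next I would invoke $\log t \le t - 1$ with $t = \frac{1}{1-\rho} \ge 1$, which gives $\log\frac{1}{1-\rho} \le \frac{1}{1-\rho} - 1 = \frac{\rho}{1-\rho}$. Multiplying both sides by the nonnegative quantity $\frac{1-\rho}{\rho}$ yields $\frac{1-\rho}{\rho}\log\frac{1}{1-\rho} \le 1$, which combined with the identity above gives $\frac{h(\rho)}{\rho} \le \log\frac1\rho + 1$, as desired. There is essentially no obstacle here; the only mild bookkeeping is the boundary behavior at $\rho \in \{0,1\}$ and making sure the substitution into $\log t \le t-1$ uses the correct range $t = 1/(1-\rho) \ge 1$, on which that inequality certainly holds.
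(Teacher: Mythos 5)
Your proof is correct, and it begins with exactly the same reduction as the paper: both split $h(\rho)/\rho$ as $\log\frac1\rho + \frac{1-\rho}{\rho}\log\frac{1}{1-\rho}$ and reduce the lemma to showing $\frac{1-\rho}{\rho}\log\frac{1}{1-\rho}\le 1$. Where you differ is in how that residual inequality is verified: the paper argues via calculus, observing that the left-hand side is nonincreasing on $(0,1)$ with limit $1$ as $\rho\to 0^+$, whereas you apply the standard bound $\log t\le t-1$ at $t=\frac{1}{1-\rho}$ to get $\log\frac{1}{1-\rho}\le\frac{\rho}{1-\rho}$ and then multiply through. Your route is the more self-contained of the two --- it avoids having to actually compute and sign the derivative (a step the paper asserts rather than carries out) and replaces it with a one-line algebraic consequence of concavity of the logarithm. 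Your handling of the endpoints $\rho\in\{0,1\}$ is also fine and slightly more careful than the paper's.
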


\begin{proof}[Proof of Theorem~\ref{thm:main}]
Let $V$ be the vertices of $\cP$.
Write $\cO := \{v^* \in V: c^\top v^* = \min_{v \in V} c^\top v\}$ for the set of optimal vertex solutions for~\eqref{eq:primal}, and let $\cS := V \setminus \cO$ be the set of suboptimal vertices.
Since $x^\eta \in \cP = \conv(V)$, we can write
\begin{equation*}
x^\eta = \sum_{v \in \cO} \lambda_v v + \sum_{w \in \cS} \lambda_w w
\end{equation*}
for some nonnegative vector $\lambda$ satisfying $\sum_{v \in V} \lambda_v = 1$.
If we let $\gamma = \sum_{w \in \cS} \lambda_w$, then $x^\eta = (1-\gamma) x^* + \gamma \tilde x$, where $x^* \in \conv(\cO)$ and $\tilde x \in \conv(\cS)$.
Since $x^*$ is a convex combination of elements of $\cO$, it lies on the optimal face of $\cP$ and is an optimal solution to~\eqref{eq:primal}.
On the other hand, since $\tilde x$ is a convex combination of suboptimal vertices, $c^\top (\tilde x - x^*) \geq \Delta$.

Let $g(\eta) := c^\top (x^\eta -  x^*)$.
We first prove two simple bounds on this quantity.
First, we have a trivial lower bound:
\begin{equation}\label{eq:g_lower_bound}
g(\eta) = c^\top(x^\eta - x^*) = \gamma c^\top (\tilde x - x^*) \geq \Delta \gamma\,.
\end{equation}
On the other hand, Proposition~\ref{prop:slow_rate} implies
\begin{equation}\label{eq:g_upper_bound}
g(\eta) \leq \eta^{-1}(\ent{x^\eta} - \ent{x^*})\,.
\end{equation}

By Lemma~\ref{lem:weak_convexity},
\begin{align}
\ent{x^\eta} - \ent{x^*} & \leq \left((1-\gamma) \ent{x^*} + \gamma \ent{\tilde x} + R_1 h(\gamma)\right) - \ent{x^*} \nonumber \\
& = R_1 h(\gamma) + \gamma(\ent{\tilde x} - \ent{x^*}) \nonumber \\
& \leq R_1 h(\gamma) + R_H \gamma\,. \label{eq:gamma_bound}
\end{align}
By assumption, $\eta \geq \frac{R_1 + R_H}{\Delta}$, so using~\eqref{eq:g_lower_bound} and~\eqref{eq:g_upper_bound} yields
\begin{equation*}
\gamma \leq \frac{g(\eta)}{\Delta} \leq \frac{\eta^{-1} R_H}{\Delta} \leq \frac{R_H}{R_1 + R_H} < 1\,.
\end{equation*}
Lemma~\ref{lem:monotone} then implies that $R_1 h(\gamma) + R_H \gamma \leq R_1 h\left(\frac{g(\eta)}{\Delta}\right) + R_H\frac{g(\eta)}{\Delta}$.
We can combine this observation with~\eqref{eq:gamma_bound} to obtain
\begin{equation*}
\eta g(\eta) \leq \ent{x^\eta} - \ent{x^*} \leq R_1 h\Big(\frac{g(\eta)}{\Delta}\Big) + R_H \frac{g(\eta)}{\Delta}\,.
\end{equation*}
Writing $\rho = \frac{g(\eta)}{\Delta}$ yields the fixed-point equation
\begin{equation*}
(\eta \Delta - R_H) \rho \leq R_1 h(\rho)\,.
\end{equation*}
By Lemma~\ref{lem:binary_entropy_bound}, $\frac{h(\rho)}{\rho} \leq \log \frac 1 \rho + 1$ for all $\rho \leq 1$.
We obtain
\begin{equation*}
\frac{\eta \Delta - R_H}{R_1} - 1 \leq \log \frac 1 \rho\,,
\end{equation*}
and hence
\begin{equation*}
g(\eta) = \Delta \rho \leq \Delta \exp\Big(-\eta\frac{\Delta}{R_1} + \frac{R_1 + R_H}{R_1}\Big)\,,
\end{equation*}
as desired.
\end{proof}
We also obtain a corollary which establishes the distance of $x^\eta$ to the optimal face, which reproduces the result of~\citet{ComSan94}.
Let $F := \conv\{\cO\}$ be the optimal face of $\cP$ with respect to the objective $c^\top x$, and denote by $d_1(x, F)$ the $\ell_1$ distance of the point $x$ to $F$.
\begin{corollary}
If $\eta \geq \frac{R_1 + R_H}{\Delta}$, then
\begin{equation*}
d_1(x^\eta, F) \leq 2 R_1 \exp\Big(-\eta\frac{\Delta}{R_1} + \frac{R_1 + R_H}{R_1}\Big)\,.
\end{equation*}
In particular,
\begin{equation*}
\lim_{\eta \to \infty} \frac{d_1(x, F)}{\exp(-M \eta)} = 0
\end{equation*}
for any $M < \Delta/R_1$.
\end{corollary}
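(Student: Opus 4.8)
The plan is to piggyback entirely on the proof of Theorem~\ref{thm:main}: the same convex decomposition of $x^\eta$ that controls the objective gap also controls the $\ell_1$ distance to the optimal face $F$.

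First I would recall from that proof the decomposition $x^\eta = (1-\gamma) x^* + \gamma \tilde x$, where $x^* \in \conv(\cO) = F$ is an optimal solution to~\eqref{eq:primal} and $\tilde x \in \conv(\cS) \subseteq \cP$, with $\gamma = \sum_{w \in \cS} \lambda_w \in [0,1)$. Since $x^* \in F$, the infimum defining $d_1(x^\eta, F)$ is at most $\|x^\eta - x^*\|_1$. Because $x^\eta - x^* = \gamma(\tilde x - x^*)$, the triangle inequality and the definition of the $\ell_1$ radius give $\|x^\eta - x^*\|_1 = \gamma \|\tilde x - x^*\|_1 \le \gamma(\|\tilde x\|_1 + \|x^*\|_1) \le 2 R_1 \gamma$, so that $d_1(x^\eta, F) \le 2 R_1 \gamma$.

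Second, I would invoke the bound on $\gamma$ already established inside the proof of Theorem~\ref{thm:main}, namely $\gamma \le g(\eta)/\Delta = \rho$ together with $\rho \le \exp\big(-\eta \tfrac{\Delta}{R_1} + \tfrac{R_1 + R_H}{R_1}\big)$, valid whenever $\eta \ge (R_1 + R_H)/\Delta$. Chaining these two estimates yields the claimed inequality. For the limiting statement, fix any $M < \Delta/R_1$ and divide the bound by $\exp(-M\eta)$; what remains is $2 R_1 e^{(R_1 + R_H)/R_1} \exp\big(-\eta(\tfrac{\Delta}{R_1} - M)\big)$, which tends to $0$ as $\eta \to \infty$ since $\tfrac{\Delta}{R_1} - M > 0$.

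I do not expect a genuine obstacle here: all the quantitative content is inherited from Theorem~\ref{thm:main}, and the argument is two lines of triangle inequality plus a substitution. The only points meriting a word of care are that $d_1(\cdot, F)$ is an infimum over $F$, so exhibiting the single near point $x^*$ already suffices for the upper bound, and that the constant $2$ (rather than $1$) is genuinely needed in general, since $\tilde x$ and $x^*$ need not be comparable in $\ell_1$ and one can only bound each of them by $R_1$ separately.
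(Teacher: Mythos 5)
Your proposal is correct and matches the paper's proof essentially verbatim: both reuse the decomposition $x^\eta = (1-\gamma)x^* + \gamma\tilde x$ and the bound $\gamma \le g(\eta)/\Delta$ from the proof of Theorem~\ref{thm:main}, then pass to $d_1(x^\eta,F) \le \|x^\eta - x^*\|_1 \le 2\gamma R_1$ via the triangle inequality. The only addition you make, spelling out the limit computation for the second displayed claim, is a small and correct elaboration of what the paper leaves implicit.
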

\begin{proof}
Using the notation of Theorem~\ref{thm:main}, we have that there exist points $x^*, \tilde x \in \cP$ such that $x^*$ is optimal and
\begin{equation*}
x^\eta = (1-\gamma) x^* + \gamma \tilde x\,,
\end{equation*}
for $\gamma \leq \frac{g(\eta)}{\Delta} \leq \exp\left(-\eta\frac{\Delta}{R_1} + \frac{R_1 + R_H}{R_1}\right)$.
We obtain
\begin{equation*}
d_1(x^\eta, F) \leq \|x^\eta - x^*\|_1 \leq \gamma \|\tilde x - x^*\|_1 \leq 2 \gamma R_1\,,
\end{equation*}
and the claim follows.
\end{proof}

The quantity $\Delta$ is quite brittle, since it can be affected by the presence of even a single almost-optimal vertex whose objective value is very close to that of the optimal vertex.
However, the definition of $\Delta$ can be relaxed slightly to account for this case, as the following corollary shows.
\begin{corollary}
For any $\tau > 0$, let $\cO_\tau:=\{v \in V: c^\top v - \min_{x \in \cP} c^\top x \leq \tau\}$ and $\Delta_\tau := \min_{\substack{w \in \cO_\tau \\ v \in V \setminus \cO_\tau}} c^\top (v - w)$.
If $\eta \geq \frac{R_1 + R_H}{\Delta_\tau}$, then the optimal solution $x^\eta$ of~\eqref{eq:penalized} satisfies
\begin{equation*}
c^\top x^\eta - \min_{x \in \cP} c^\top x \leq \Delta_\tau \exp\Big(-\eta \frac{\Delta_\tau}{R_1} + \frac{R_1 + R_H}{R_1}\Big) + \tau\,.
\end{equation*}
\end{corollary}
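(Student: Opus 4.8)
The plan is to reduce this to Theorem~\ref{thm:main} by treating the set $\cO_\tau$ of near-optimal vertices as though it were the optimal face. Concretely, I would mimic the proof of Theorem~\ref{thm:main}: decompose $x^\eta \in \conv(V)$ as a convex combination over $\cO_\tau$ and over $V \setminus \cO_\tau$, so that $x^\eta = (1-\gamma) \bar x + \gamma \tilde x$ where $\bar x \in \conv(\cO_\tau)$, $\tilde x \in \conv(V \setminus \cO_\tau)$, and $\gamma = \sum_{w \in V \setminus \cO_\tau} \lambda_w$. Fix any optimal vertex $v^* \in \cO$, which also lies in $\cO_\tau$. The key separation estimate is that $c^\top(\tilde x - \bar x) \geq \Delta_\tau$, since $\tilde x$ is a convex combination of vertices outside $\cO_\tau$ and $\bar x$ one of vertices inside $\cO_\tau$, and $\Delta_\tau$ is precisely the minimal such gap.

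Next I would define $g(\eta) := c^\top(x^\eta - v^*)$ and split it as $g(\eta) = c^\top(\bar x - v^*) + \gamma\, c^\top(\tilde x - \bar x)$. The first term is at most $\tau$ because $\bar x$ is a convex combination of vertices in $\cO_\tau$, each of which has objective value within $\tau$ of the optimum; the second term is at least $\Delta_\tau \gamma$. Hence $g(\eta) - \tau \leq \gamma^{-1}$—wait, more precisely, I get the two bounds $g(\eta) \geq \Delta_\tau \gamma$ (from dropping the first term's sign, which could be negative, but since all of $\cO_\tau$'s vertices satisfy $c^\top v \geq \min c^\top x$ the term $c^\top(\bar x - v^*)\geq 0$, so actually $g(\eta) \geq \Delta_\tau\gamma$ directly) and, from the slow rate (Proposition~\ref{prop:slow_rate}) applied with $v^*$ as the comparison optimal point, $g(\eta) \leq \eta^{-1}(\ent{x^\eta} - \ent{v^*})$. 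Then Lemma~\ref{lem:weak_convexity} gives $\ent{x^\eta} - \ent{v^*} \leq R_1 h(\gamma) + R_H \gamma$ exactly as in the main proof, and the hypothesis $\eta \geq (R_1+R_H)/\Delta_\tau$ forces $\gamma < 1$, so Lemmas~\ref{lem:monotone} and~\ref{lem:binary_entropy_bound} convert the resulting inequality $(\eta\Delta_\tau - R_H)\gamma \leq R_1 h(\gamma)$ into $\gamma \leq \exp(-\eta \Delta_\tau/R_1 + (R_1+R_H)/R_1)$, verbatim from the proof of Theorem~\ref{thm:main} with $\Delta$ replaced by $\Delta_\tau$ and $\rho$ replaced by $\gamma$.

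Finally, I would assemble the conclusion: $c^\top x^\eta - \min_{x \in \cP} c^\top x = g(\eta) = c^\top(\bar x - v^*) + \gamma\, c^\top(\tilde x - \bar x) \leq \tau + \gamma\, c^\top(\tilde x - \bar x)$. To bound $\gamma\, c^\top(\tilde x - \bar x)$ I would note $c^\top(\tilde x - \bar x) \leq \max_{v,w \in V} c^\top(v-w) =: \Delta_{\max}$, giving a factor $\Delta_{\max}$ rather than the clean $\Delta_\tau$ claimed. To land exactly the stated bound $\Delta_\tau \exp(\cdots) + \tau$, the cleaner route is to observe that $\gamma\, c^\top(\tilde x - \bar x) \le g(\eta)$, and then run the same fixed-point argument directly on the quantity $\tilde g(\eta) := \gamma\, c^\top(\tilde x - \bar x)$ with the substitution $\rho = \tilde g(\eta)/\Delta_\tau$: since $\tilde g(\eta) \geq \Delta_\tau \gamma$ still holds and $\tilde g(\eta) \leq g(\eta) \leq \eta^{-1}(R_1 h(\gamma) + R_H\gamma)$, the identical chain yields $\tilde g(\eta) \leq \Delta_\tau \exp(-\eta\Delta_\tau/R_1 + (R_1+R_H)/R_1)$, and then $c^\top x^\eta - \min c^\top x \leq \tau + \tilde g(\eta)$ is the claim.

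The only genuine subtlety—the part I expect to require care—is bookkeeping the inequality chain so that it is $\tilde g(\eta)$ (the ``suboptimal mass times gap'' term), not the full $g(\eta)$, that gets fed into the fixed-point step, so that the prefactor comes out as $\Delta_\tau$ and not as a larger diameter-type quantity; everything else is a word-for-word transcription of the proof of Theorem~\ref{thm:main}.
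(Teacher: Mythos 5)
Your overall strategy---rerunning the proof of Theorem~\ref{thm:main} with $\cO_\tau$ playing the role of $\cO$ and $V\setminus\cO_\tau$ playing the role of $\cS$---is viable, but it takes a genuinely different path from the paper. The paper's proof is a short reduction: it assumes $c^\top x^\eta - \min c^\top x > \tau$ (else the claim is vacuous), defines the truncated polytope $\cP_\tau := \cP\cap\{x: c^\top(x-x^*)\geq\tau^*\}$ with $\tau^* := \max_{v\in\cO_\tau} c^\top v - \min c^\top x$, observes that $x^\eta$ is the penalized optimizer over $\cP_\tau$, that the suboptimality gap of $\cP_\tau$ is exactly $\Delta_\tau$, and that its $R_1$, $R_H$ are no larger than those of $\cP$, and then simply invokes Theorem~\ref{thm:main}. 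No re-derivation of the fixed-point inequality is needed.

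There is, however, a concrete gap in your version. In the chain $\tilde g(\eta)\leq g(\eta)\leq\eta^{-1}(R_1 h(\gamma)+R_H\gamma)$, the second inequality rests on the claim that Lemma~\ref{lem:weak_convexity} gives $\ent{x^\eta}-\ent{v^*}\leq R_1 h(\gamma)+R_H\gamma$ ``exactly as in the main proof.'' It does not: your decomposition is $x^\eta=(1-\gamma)\bar x+\gamma\tilde x$ with $\bar x\in\conv(\cO_\tau)$, not $v^*$, so Lemma~\ref{lem:weak_convexity} yields
\begin{equation*}
\ent{x^\eta}-\ent{v^*}\leq(1-\gamma)\bigl(\ent{\bar x}-\ent{v^*}\bigr)+\gamma\bigl(\ent{\tilde x}-\ent{v^*}\bigr)+R_1 h(\gamma)\,,
\end{equation*}
and the term $(1-\gamma)(\ent{\bar x}-\ent{v^*})$ is not controlled by $R_H\gamma$; indeed $\bar x$, being a convex combination, will generically have larger entropy than the vertex $v^*$, so this term can be of order $R_H$ on its own. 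The fix is to use $\bar x$, not $v^*$, as the comparison point throughout: since $\bar x\in\cP$ is feasible, the slow-rate argument gives directly $\tilde g(\eta)=c^\top(x^\eta-\bar x)\leq\eta^{-1}(\ent{x^\eta}-\ent{\bar x})$, and Lemma~\ref{lem:weak_convexity} applied to the same decomposition gives $\ent{x^\eta}-\ent{\bar x}\leq\gamma(\ent{\tilde x}-\ent{\bar x})+R_1 h(\gamma)\leq R_H\gamma+R_1 h(\gamma)$. With this substitution the rest of your fixed-point argument in $\rho=\tilde g(\eta)/\Delta_\tau$ goes through verbatim, and appending $c^\top(\bar x-v^*)\leq\tau$ at the end yields the claim. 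So your route works, but only after this correction to the comparison point; as written the inequality you cite from the main proof is false in this setting.
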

\begin{proof}
If $c^\top x^\eta - \min_{x \in \cP} c^\top x \leq \tau$, the claim is vacuous, so assume that $c^\top x^\eta - \min_{x \in \cP} c^\top x > \tau$.
Let $\tau^* := \max_{v \in \cO_\tau} c^\top v - \min_{x \in \cP} c^\top x$, and note that $\tau^* \leq \tau$.
Given an optimal solution $x^*$ to~\eqref{eq:primal}, let $\cP_\tau := \cP \cap \{x: c^\top (x - x^*) \geq \tau^*\}$.
We have $\min_{x \in \cP_\tau} c^\top x - \min_{x  \in \cP} c^\top x = \tau^* \leq \tau$.
Moreover, the suboptimality gap of $\cP_\tau$ is $\Delta_\tau$.

By assumption, $x^\eta \in \cP_\tau$, and so $x^\eta$ is the solution to~\eqref{eq:penalized} over the smaller polytope $\cP_\tau$.
Applying Theorem~\ref{thm:main} to $\cP_\tau$ yields the claim.
\end{proof}
While the quantities $R_1$ and $R_H$ are easy to calculate, evaluating the suboptimality gap $\Delta$ is not easy in general.
Nevertheless, as we noted above, intuition from statistical physics implies that some dependence on $\Delta$ is necessary to obtain exponential convergence, a point which we substantiate in Section~\ref{sec:lower}.
We note the obvious fact that this dependence can be removed for integral polytopes, which are a core object of study in combinatorial optimization~\citep{Sch03}.
\begin{corollary}\label{cor:integrality}
If $\cP$ is integral and the entries of $c$ are integers, then
\begin{equation*}
c^\top x^\eta - \min_{x \in \cP} c^\top x \leq \exp\Big(- \frac{\eta}{R_1} + \frac{R_1 + R_H}{R_1}\Big)
\end{equation*}
for all $\eta \geq R_1 + R_H$.
\end{corollary}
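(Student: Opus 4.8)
The plan is to observe that for an integral polytope with integer cost vector $c$, the suboptimality gap $\Delta$ is automatically bounded below by $1$, after which the result is an immediate consequence of Theorem~\ref{thm:main}. First I would recall that the vertices of an integral polytope $\cP$ have integer coordinates, so that $c^\top v \in \ZZ$ for every vertex $v$ whenever $c$ has integer entries. Consequently, if $v^* \in \argmin_{v \in V} c^\top v$ and $v \in \cS = \{v \in V : c^\top v > c^\top v^*\}$, then $c^\top(v - v^*)$ is a positive integer, hence at least $1$. Taking the minimum over $\cS$ gives $\Delta \geq 1$.

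Next I would want to plug this into Theorem~\ref{thm:main}, but one has to be slightly careful, since the right-hand side $\Delta \exp(-\eta \Delta/R_1 + (R_1 + R_H)/R_1)$ is not obviously monotone in $\Delta$, and we only know a lower bound on $\Delta$. The clean way around this is to note that the quantity we actually need to control is $g(\eta) = c^\top x^\eta - \min_{x \in \cP} c^\top x$, and the proof of Theorem~\ref{thm:main} passes through the fixed-point inequality $(\eta \Delta - R_H)\rho \leq R_1 h(\rho)$ with $\rho = g(\eta)/\Delta$; what ultimately gets bounded there is $g(\eta) = \Delta \rho$, via $g(\eta) \leq \Delta \exp(-\eta \Delta/R_1 + (R_1+R_H)/R_1)$. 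Since $t \mapsto t \exp(-\eta t / R_1)$ is decreasing for $t \geq R_1/\eta$, and our hypothesis $\eta \geq R_1 + R_H$ together with $\Delta \geq 1$ gives $\Delta \geq 1 \geq R_1/\eta$ precisely when $R_1 \leq \eta$ — which follows from $\eta \geq R_1 + R_H \geq R_1$ — the bound with $\Delta$ replaced by $1$ is at least as large. Hence it suffices to invoke Theorem~\ref{thm:main} and then substitute $\Delta \geq 1$ in the exponent and the prefactor. To be safe about the monotonicity subtlety, the cleanest presentation is simply to rerun the argument with $\Delta = 1$: the hypothesis $\eta \geq R_1 + R_H$ is exactly the hypothesis of Theorem~\ref{thm:main} for a gap of $1$, and since the true gap is at least $1$, every inequality in that proof (in particular $c^\top(\tilde x - x^*) \geq \Delta \geq 1$ and $\gamma \leq g(\eta)/1$) goes through verbatim, yielding $g(\eta) \leq \exp(-\eta/R_1 + (R_1 + R_H)/R_1)$.

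I expect the only real obstacle to be the bookkeeping around replacing $\Delta$ by its lower bound $1$ without appealing to a (false in general) monotonicity of the full expression; the resolution above — observing that one may simply substitute $\Delta = 1$ at the outset because it only weakens every hypothesis and every intermediate inequality used in the proof of Theorem~\ref{thm:main} — sidesteps this entirely. Everything else is a one-line integrality argument. The proof would therefore read: integral polytope plus integer $c$ forces $\Delta \geq 1$; apply Theorem~\ref{thm:main} with the gap $1$ in place of $\Delta$ (valid since the genuine gap dominates it), and the stated bound drops out for all $\eta \geq R_1 + R_H$.
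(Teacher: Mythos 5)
Your proof is correct and follows essentially the same route as the paper: observe that integrality of $\cP$ together with $c \in \ZZ^n$ forces $\Delta \geq 1$, then invoke Theorem~\ref{thm:main}. The paper leaves implicit the justification for substituting the lower bound $1$ for $\Delta$ in the displayed bound; your remark that $t \mapsto t\, e^{-\eta t / R_1}$ is decreasing for $t \geq R_1/\eta$ (or, more simply, that the proof of the theorem can be rerun verbatim with $\Delta$ replaced by $1$, since $\Delta \geq 1$ only weakens each inequality) is a worthwhile clarification of a step the paper glosses over.
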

\begin{proof}
By definition, the vertices of $\cP$ have integer coordinates, so for any vertex $v$, if $c$ is an integer vector then $c^\top v \in \ZZ$.
Therefore if $v^*$ is an optimal vertex and $c^\top v > c^\top v^*$, then $c^\top (v - v^*) \geq 1$, so $\Delta \geq 1$.
\end{proof}
\section{Lower bound}\label{sec:lower}
In this section, we present an explicit example of a simple family of linear programs for which our analysis is tight, up to constant factors.
This example evinces the two phenomena present in Theorem~\ref{thm:main}: the convergence of $c^\top x^\eta$ to the optimum is slow until $\eta$ is of order $\frac{R_1 + R_H}{\Delta}$, and once this threshold is reached convergence happens at precisely the speed indicated in the upper bound.
This example also validates the intuition presented above about the necessary dependence on the suboptimality gap: exponentially fast convergence is obtained only when $c^\top x^\eta - \min_{x \in \cP} c^\top x \leq \Delta$.

Fix positive constants $\alpha$ and $\beta$ and a dimension $d \geq 2$.
Let $c \in \RR^d$ be given by
\begin{equation*}
c_i = \left\{\begin{array}{ll}
0 & \text{ if $i = 0$,} \\
\alpha & \text{ otherwise,}
\end{array}\right.
\end{equation*}
and consider the linear program
\begin{equation}\label{eq:simplex}
\begin{array}{rl}
\min & c^\top x  \\
\text{subject to}& \sum_i x_i  = \beta \\
& x  \geq 0\,,
\end{array}
\end{equation}
Note that the polytope $\cP$ defined by the constraints of~\eqref{eq:simplex} is a rescaled version of the $d$-dimensional probability simplex.
We focus on the following penalized program:
\begin{equation}\label{eq:mw}
\begin{array}{rl}
\min & c^\top x - \eta^{-1} \ent{x} \\
\text{subject to}& \sum_i x_i  = \beta\,.
\end{array}
\end{equation}
We make the following simple observations about~\eqref{eq:simplex} and~\eqref{eq:mw}:
\begin{itemize}
\item The unique optimal solution to~\eqref{eq:simplex} is $x^* = e_1$, the first elementary basis vector, and $c^\top x^* = 0$.
\item The maximum value of $c^\top x$ over $\cP$ is $\alpha \beta$, achieved at any vertex other than $e_1$.
\item For this polytope, $\Delta = \alpha \beta$, $R_1 = \beta$, and $R_H = \beta \log d$.
\end{itemize}

The penalized program~\eqref{eq:mw} has an explicit solution, which is given by a rescaled version of the Gibbs distribution~\citep{MezMon09}.
\begin{proposition}\label{prop:mw_explicit}
The optimal solution $x^\eta$ to~\eqref{eq:mw} is given by
\begin{equation*}
x^\eta_i = \frac{\beta e^{- \eta c_i}}{\sum_j e^{-\eta c_j}}\,.
\end{equation*}
\end{proposition}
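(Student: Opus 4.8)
The plan is to verify directly that the proposed $x^\eta$ is the unique minimizer of~\eqref{eq:mw} by writing down the first-order optimality (Lagrangian/KKT) conditions and checking that the candidate satisfies them. Since the only constraint is the single linear equality $\sum_i x_i = \beta$, and the objective $c^\top x - \eta^{-1}\ent{x} = c^\top x + \eta^{-1}\sum_i x_i \log x_i$ is strictly convex on the positive orthant (the entropy term has Hessian $\eta^{-1}\diag(1/x_i)\succ 0$), the KKT conditions are both necessary and sufficient and pin down a unique minimizer. First I would form the Lagrangian $L(x,\nu) = c^\top x + \eta^{-1}\sum_i x_i \log x_i - \nu\big(\sum_i x_i - \beta\big)$ and set $\partial L/\partial x_i = c_i + \eta^{-1}(\log x_i + 1) - \nu = 0$, which gives $x_i = \exp\big(\eta\nu - 1 - \eta c_i\big) = C e^{-\eta c_i}$ for a constant $C$ independent of $i$.

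Next I would solve for $C$ using the equality constraint: summing over $i$ gives $C\sum_j e^{-\eta c_j} = \beta$, hence $C = \beta/\sum_j e^{-\eta c_j}$, which yields exactly the claimed formula $x^\eta_i = \beta e^{-\eta c_i}/\sum_j e^{-\eta c_j}$. I would then note that this $x^\eta$ has strictly positive entries and satisfies the constraint, so it is feasible (and indeed lies in the relative interior where the entropy is differentiable, so there is no subtlety about the boundary). Because the problem is a strictly convex minimization over an affine set and we have exhibited a feasible point satisfying the stationarity condition, $x^\eta$ is the unique global minimizer.

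One small point worth addressing for rigor: one should check that the infimum is actually attained and not approached only at the boundary of the feasible region. This follows because the feasible set $\{x \geq 0 : \sum_i x_i = \beta\}$ is compact (it is the rescaled simplex) and the objective is continuous on it (with the convention $0\log 0 = 0$), so a minimizer exists; strict convexity then makes it unique, and the KKT computation identifies it. Alternatively, one can observe that as any coordinate $x_i \to 0$ the directional derivative of $x_i\log x_i$ tends to $-\infty$, so no boundary point can be a minimizer — the minimizer is interior, where the stationarity equation is exactly what we solved.

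I do not anticipate a genuine obstacle here; this is a routine verification. The only thing to be careful about is being explicit that strict convexity plus the affine constraint upgrades the stationary point from "a critical point" to "the unique minimizer," and handling the $0\log 0$ convention cleanly so that compactness of the rescaled simplex can be invoked for existence. Everything else is a one-line Lagrange-multiplier calculation.
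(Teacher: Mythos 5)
Your proof is correct. The paper actually states Proposition~\ref{prop:mw_explicit} without proof, treating it as the standard fact that the entropic penalty over the (rescaled) simplex yields the Gibbs/softmax distribution; the Lagrange-multiplier calculation you give, together with your observations that strict convexity of $x \mapsto c^\top x + \eta^{-1}\sum_i x_i \log x_i$ on the affine slice yields uniqueness and that the blow-up of $\frac{d}{dx_i}(x_i \log x_i)$ near $x_i = 0$ rules out boundary minimizers, is exactly the argument one would supply to fill in this omission.
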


The guarantee of Theorem~\ref{thm:main} requires that $\eta \geq \frac{R_1 + R_H}{R_H} = \frac{1+\log d}{\alpha}$.
We now show that when $\eta$ is significantly smaller than this quantity, the solution to the penalized program is far from the true optimum.
Indeed, the following proposition establishes that we cannot even achieve a constant-factor improvement over the \emph{maximum} value of $c^\top x$ over $\cP$ until $\eta$ is of order $\frac{\log d}{\alpha}$.

\begin{proposition}\label{prop:no_progress}
For any $\ep > 0$, if $\eta \leq \frac{\log \ep d}{\alpha}$, then $c^\top \eta \geq (1-\ep) \alpha \beta$.
\end{proposition}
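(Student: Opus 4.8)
The plan is to leverage the explicit description of $x^\eta$ furnished by Proposition~\ref{prop:mw_explicit} and to reduce the statement to a single elementary inequality. (I note in passing that the inequality in the statement should read $c^\top x^\eta \ge (1-\ep)\alpha\beta$.) First I would record the normalizing constant: since exactly one coordinate of $c$ is $0$ and the remaining $d-1$ coordinates equal $\alpha$, the denominator in Proposition~\ref{prop:mw_explicit} is
\begin{equation*}
Z := \sum_j e^{-\eta c_j} = 1 + (d-1)\,e^{-\eta\alpha}\,.
\end{equation*}
The $d-1$ coordinates with $c_i = \alpha$ then carry total mass $\beta (d-1) e^{-\eta\alpha}/Z = \beta\bigl(1 - 1/Z\bigr)$, so that
\begin{equation*}
c^\top x^\eta = \alpha\beta\Big(1 - \tfrac1Z\Big)\,,
\end{equation*}
and the claim becomes equivalent to the single inequality $Z \ge 1/\ep$.

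Next I would feed in the hypothesis $\eta \le \alpha^{-1}\log(\ep d)$. Because $\eta > 0$, this hypothesis is vacuous unless $\ep d \ge 1$, so we may assume $\ep d \ge 1$ at no cost. Multiplying the hypothesis by $\alpha$ and exponentiating gives $e^{-\eta\alpha} \ge 1/(\ep d)$, whence
\begin{equation*}
Z \ge 1 + \frac{d-1}{\ep d} = \frac{\ep d + d - 1}{\ep d} \ge \frac{d}{\ep d} = \frac1\ep\,,
\end{equation*}
where the last inequality is, once again, simply $\ep d \ge 1$. Substituting this bound into the displayed formula for $c^\top x^\eta$ yields $c^\top x^\eta \ge (1-\ep)\alpha\beta$, which is the claim.

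There is no serious obstacle in this argument; the only point needing a moment's care is that the bound $Z \ge 1/\ep$ is attained with equality exactly when $\ep d = 1$, which is precisely the threshold at which the hypothesis on $\eta$ ceases to constrain anything — so the estimate loses nothing and requires no side condition on $\ep$. After the proof I would add the remark that, since $\frac{R_1 + R_H}{\Delta} = \frac{1+\log d}{\alpha}$ for this polytope, Proposition~\ref{prop:no_progress} shows that $c^\top x^\eta$ remains within a constant factor of its \emph{maximum} over $\cP$ until $\eta$ is of order $\frac{\log d}{\alpha}$, i.e. within a constant factor of the threshold demanded by Theorem~\ref{thm:main}; this makes the restriction $\eta \ge \frac{R_1+R_H}{\Delta}$ in Theorem~\ref{thm:main} tight up to constants.
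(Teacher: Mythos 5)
Your proof is correct and is essentially the same argument as the paper's, just phrased as a direct deduction rather than as a contrapositive: both proofs begin from the closed form of $x^\eta$ in Proposition~\ref{prop:mw_explicit}, compute the normalizing constant $Z = 1 + (d-1)e^{-\eta\alpha}$, and observe that the claim is equivalent to $Z \geq 1/\ep$. You are also right that the statement contains a typo and should read $c^\top x^\eta \geq (1-\ep)\alpha\beta$.

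One small point worth flagging: your reduction to the case $\ep d \geq 1$ is harmless but deserves a word of care on strictness. If $\eta$ is required to be strictly positive (as it is throughout the paper) and $\ep d = 1$, the hypothesis $\eta \leq \log(\ep d)/\alpha = 0$ is still vacuous, so nothing is lost; and when $\ep \geq 1$ the conclusion holds trivially since $c^\top x^\eta \geq 0 \geq (1-\ep)\alpha\beta$. With those observations your chain $Z \geq 1 + \tfrac{d-1}{\ep d} = \tfrac{\ep d + d - 1}{\ep d} \geq \tfrac{1}{\ep}$ is airtight. The paper's contrapositive phrasing sidesteps the need to discuss vacuity explicitly, which is the only stylistic advantage it has; your version makes the equality case ($\ep d = 1$) and the tightness of the estimate more visible, which is a modest expository gain.
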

\begin{proof}
We prove the contrapositive.
Note that $c^\top x^\eta = \alpha (\beta - x^\eta_0)$, so if $c^\top x^\eta < (1-\ep) \alpha \beta$ then $x^\eta_0 > \ep \beta$.
By Proposition~\ref{prop:mw_explicit}, we can write explicitly
\begin{equation*}
x^\eta_0 = \frac{\beta}{\sum_{j} e^{-\eta c_j}} = \frac{\beta}{1 + (d-1) e^{- \eta \alpha}}\,.
\end{equation*}
If $x^\eta_0 > \ep \beta$, then $d e^{-\eta \alpha} < 1 + (d-1) e^{-\eta \alpha} < \frac{1}{\ep}$, so $\eta > \frac{\log \ep d}{\alpha}$, as claimed.
\end{proof}

The next proposition shows that the Theorem~\ref{thm:main} is tight up to a small constant factor.
\begin{proposition}\label{prop:asymptotic_rate}
If $\eta \geq \frac{1 + \log d}{\alpha} = \frac{R_1 + R_H}{\Delta}$, then
\begin{equation*}
c^\top x^\eta \geq \frac{1}{9} \alpha \beta e^{- \eta \alpha + 1 + \log d} = \frac{1}{9} \Delta \exp\Big(-\eta \frac{\Delta}{R_1} + \frac{R_1 + R_H}{R_1}\Big)\,.
\end{equation*}
\end{proposition}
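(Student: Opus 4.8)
The plan is to compute $c^\top x^\eta$ exactly using the closed form from Proposition~\ref{prop:mw_explicit}, and then lower-bound the resulting expression by tracking constants carefully. First I would use Proposition~\ref{prop:mw_explicit} to write $x^\eta_0 = \beta / (1 + (d-1)e^{-\eta\alpha})$, and since $c^\top x^\eta = \alpha(\beta - x^\eta_0)$ (exactly as in the proof of Proposition~\ref{prop:no_progress}), obtain the identity
\begin{equation*}
c^\top x^\eta = \alpha\beta \cdot \frac{(d-1)e^{-\eta\alpha}}{1 + (d-1)e^{-\eta\alpha}}\,.
\end{equation*}
So the whole proposition reduces to a one-variable inequality in $t := (d-1)e^{-\eta\alpha}$.

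Next I would exploit the hypothesis $\eta \geq \frac{1+\log d}{\alpha}$, which gives $e^{-\eta\alpha} \leq e^{-1-\log d} = \frac{1}{ed}$, hence $t \leq \frac{d-1}{ed} < \frac1e$. Because $t < 1/e$, the denominator satisfies $1 + t < 1 + \frac1e$, so $\frac{t}{1+t} \geq \frac{e}{e+1}\,t$. Then I would rewrite $t = \frac{d-1}{d}\cdot d e^{-\eta\alpha} \geq \frac12 d e^{-\eta\alpha}$, using $d \geq 2$. Chaining these bounds,
\begin{equation*}
\frac{t}{1+t} \geq \frac{e}{e+1}\cdot\frac12\, d e^{-\eta\alpha} = \frac{e}{2(e+1)}\, d e^{-\eta\alpha} \geq \frac19\, d e^{-\eta\alpha}\,,
\end{equation*}
where the last step is the numerical fact $2(e+1) \leq 9$ (i.e. $2e \leq 7$). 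Multiplying through by $\alpha\beta$ and noting $d e^{-\eta\alpha} = e^{-\eta\alpha + \log d}$, $\alpha\beta = \Delta$, $1 = \frac{R_1+R_H}{R_1} - \frac{R_H}{R_1}$... more simply, $\frac{R_1+R_H}{R_1} = 1 + \log d$ here, so the exponent matches and we get the claimed bound.

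There is no genuine obstacle: every step is elementary. The only thing requiring a little care is the constant bookkeeping — choosing where to "spend" the slack so that the final constant comes out as the clean value $1/9$ rather than something uglier, and in particular verifying the two numerical facts $t < 1/e$ (which uses the hypothesis on $\eta$) and $2(e+1) \leq 9$. I would present the argument exactly in the order above, stating the explicit formula for $c^\top x^\eta$ first so that the rest is a transparent sequence of inequalities.
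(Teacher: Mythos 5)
Your overall approach is correct and essentially the same as the paper's: both start from the closed form in Proposition~\ref{prop:mw_explicit}, use the hypothesis on $\eta$ to bound the denominator $1+(d-1)e^{-\eta\alpha}$ by a small constant, and then do explicit constant bookkeeping. Your reparametrization by $t=(d-1)e^{-\eta\alpha}$ and the factorization $\frac{d-1}{d}\ge\frac12$ for $d\ge 2$ are slightly cleaner than the paper's version (which states the intermediate bound $\sum_j e^{-\eta c_j}\le 1+\frac{d-1}{3d}$, a bound that does not literally follow from $e^{-\eta\alpha}\le\frac{1}{ed}$ since $e<3$, though the final conclusion still holds).

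There is, however, a bookkeeping slip in your last step. Your chain delivers
\begin{equation*}
c^\top x^\eta = \alpha\beta\,\frac{t}{1+t} \ \ge\ \frac{e}{2(e+1)}\,\alpha\beta\, d\, e^{-\eta\alpha}\,,
\end{equation*}
and you then pass to $\frac19\,\alpha\beta\, d\, e^{-\eta\alpha}=\frac19\,\Delta\, e^{-\eta\alpha+\log d}$. But the statement asserts $\frac19\,\Delta\, e^{-\eta\alpha+1+\log d}$, which is a factor of $e$ \emph{larger}, so what you derived is weaker than the proposition by a factor of $e$. The fix is already contained in your own numerical observation: $2(e+1)\le 9$ is equivalent to $\frac{e}{2(e+1)}\ge\frac{e}{9}$ (not merely $\ge\frac19$), and using the sharper inequality gives
\begin{equation*}
c^\top x^\eta \ \ge\ \frac{e}{9}\,\alpha\beta\, d\, e^{-\eta\alpha} \ =\ \frac19\,\alpha\beta\, e^{-\eta\alpha+1+\log d}\,,
\end{equation*}
which matches the claim exactly. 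In short: replace $\frac19$ by $\frac e9$ in the penultimate inequality and the exponent reconciles; otherwise the argument is sound.
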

\begin{proof}
If $\eta \geq \frac{\log d + 1}{\alpha}$, then $\sum_j e^{-\eta c_j} = 1 + (d-1) e^{-\eta \alpha} \leq 1+\frac{d-1}{3 d}$.
Using Proposition~\ref{prop:mw_explicit},
\begin{equation*}
c^\top x^\eta = (d-1) \alpha \frac{\beta e^{- \eta \alpha}}{\sum_j e^{-\eta c_j}} \geq \frac 1 3 d \alpha \beta e^{-\eta \alpha} \geq \frac{1}{9} \alpha \beta e^{-\eta \alpha + 1 + \log d}\,,
\end{equation*}
as claimed.
\end{proof}

\section{Entropic penalization for the assignment problem}\label{sec:assignment}
In this section, we given an application of Theorem~\ref{thm:main} to the \emph{assignment problem}, a fundamental combinatorial optimization problem~\citep{Sch03}.
Our motivation for analyzing this example explicitly is twofold.
First, this is a case where entropic penalization has already been proposed as a good candidate algorithm~\citep{KosYui94,ShaGauGri11}.
Second, as noted in the introduction, new fast algorithms for the \emph{matrix scaling problem}~\citep{CohMadTsi17,AllLiOli17} show that a penalized version of the assignment problem with cost matrix $C$ can be solved in time $\tilde O(n^2 \|C\|_\infty \eta)$.
These fast algorithms raise the prospect that entropic penalization could provide a near-linear time algorithm for the assignment problem, a major breakthrough~\citep[see, e.g.,][]{Mad13}.

Whether this breakthrough is possible depends crucially on the size of $\eta$ required to solve the problem accurately.
The best bounds available from previous works on the problem~\citep{KosYui94,ShaGauGri11} require $\eta \gtrsim n \log n$ to achieve constant accuracy, which is just the guarantee given by the slow rate (Proposition~\ref{prop:slow_rate}).
An open question implicit in these works is whether this is optimal, or whether $\eta = \tilde O(1)$ suffices.
(In particular, this would imply a near-linear time algorithm for the assignment problem.)
By applying Theorem~\ref{thm:main} and exhibiting an almost-matching lower bound, we show exactly what rates are attainable for the Birkhoff polytope.
In short, our hopes are dashed: $\eta$ cannot be taken to be dimension free in general.

We first recall the problem.
Given a bipartite graph with edge weights, the goal of the assignment problem is to find a minimum-cost perfect matching in the graph.
This problem also has a well known linear programing formulation: given a matrix $C \in \RR^{n \times n}$ of edge weights, the assignment problem is
\begin{equation}\label{eq:assignment}
\begin{array}{rl}
\min & \langle C, X \rangle  \\
\text{subject to}& X \bone = \bone \\
& X^\top \bone = \bone \\
& X  \geq 0\,,
\end{array}
\end{equation}
The polytope given by the constraints $\{X \bone = \bone, X^\top \bone = \bone, X \geq 0\}$ is known as the \emph{Birkhoff polytope}, and its vertices are the permutation matrices~\citep{Bru06}, a result known as the Birkhoff-von Neumann Theorem.

We first give an upper bound on the quality of $X^\eta$ as a function of the regularization parameter $\eta$.
We require a preliminary lemma, whose proof appears in Section~\ref{sec:lemmas}.

\begin{lemma}\label{lem:assignment_parameters}
The Birkhoff polytope $\cB$ has $R_1 = n$ and $R_H = n \log n$.
\end{lemma}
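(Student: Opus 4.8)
The plan is to compute the two radii directly from the combinatorial structure of the Birkhoff polytope $\cB$, whose vertices are the $n\times n$ permutation matrices. For the $\ell_1$ radius, recall that $R_1 = \max_{X\in\cB}\|X\|_1$, and since every $X\in\cB$ is a nonnegative matrix with all row sums equal to $1$, we have $\|X\|_1 = \sum_{i,j} X_{ij} = \sum_i (X\bone)_i = n$ for every $X\in\cB$, so $R_1 = n$ identically. This half is immediate.

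The entropic radius is the interesting half: $R_H = \max_{X,Y\in\cB} \ent{X} - \ent{Y}$, so I need to find the maximum-entropy and minimum-entropy points of $\cB$. For the minimum, note that $\ent{X} = \sum_{i,j} X_{ij}\log\frac{1}{X_{ij}} \ge 0$ since each $X_{ij}\in[0,1]$, with equality exactly when every entry is $0$ or $1$ — i.e., at a permutation matrix; so $\min_{Y\in\cB}\ent{Y} = 0$. For the maximum, I would apply the standard fact that Shannon entropy of a probability vector on a support of size $m$ is at most $\log m$; here the ``probability vector'' is the flattening of $X$ (it sums to $n$, not $1$, so I rescale: $\ent{X} = n\ent{X/n} + n\log n \le n\log n$ by the bound $\ent{p}\le\log(n^2)$ applied to the probability matrix $X/n$ supported on $n^2$ entries — wait, that gives $n\log n^2 = 2n\log n$). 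The correct route is to use the row-sum structure: $\ent{X} = \sum_i \big[ (\text{entropy of row } i) \big] \le \sum_i \log n = n\log n$, using that row $i$ is a probability vector in $\RR^n$ so its entropy is at most $\log n$, with equality when every entry equals $1/n$. The uniform matrix $\bar X$ with $\bar X_{ij} = 1/n$ is doubly stochastic, hence in $\cB$, and achieves $\ent{\bar X} = n\log n$. Combining, $R_H = n\log n - 0 = n\log n$.

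The only mild subtlety — and the step I'd flag as requiring care rather than difficulty — is getting the right constant in the entropy upper bound: one must exploit the row-stochasticity (or column-stochasticity) constraint to get the tight $n\log n$ rather than the crude $2n\log n$ that comes from treating $X$ as an arbitrary nonnegative matrix with total mass $n$. Once that is in place, exhibiting the uniform doubly stochastic matrix as the maximizer and any permutation matrix as the minimizer completes the proof. No fixed-point or optimization machinery is needed; this is a direct verification.
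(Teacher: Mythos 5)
Your proof is correct and follows essentially the same route as the paper: $R_1=n$ by row-stochasticity, and $R_H$ via the decomposition $\ent{X}=\sum_i \ent{X_i}$ with each row's entropy in $[0,\log n]$; you additionally make explicit that the extremes are attained (by the uniform matrix and by permutation matrices), which the paper leaves implicit since it only needs the upper bound $R_H \le n\log n$. One small correction to your aside: the rescaling identity has a sign error --- it should read $\ent{X} = n\,\ent{X/n} - n\log n$, and with that fixed the ``crude'' bound $\ent{X/n}\le \log(n^2)$ actually gives $\ent{X}\le n\log n$ directly, so the row decomposition is not strictly necessary to avoid a loss of a factor of $2$ (though it is cleaner and is what the paper does).
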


Lemma~\ref{lem:assignment_parameters} combined with Theorem~\ref{thm:main} yields the following guarantee for the entropic penalty applied to the assignment problem.
For normalization purposes, we assume that the entries of $C$ are nonnegative integers, as is common in the combinatorial optimization literature.
\begin{proposition}\label{prop:assignment_upper}
An additive $\ep$ approximation to the assignment problem with cost matrix $C \in \ZZ_{\geq 0}^{n \times n}$ can be found by solving an entropy-penalized version of~\eqref{eq:assignment} with parameter $\eta = O\left(n \log \frac n \ep\right)$.
\end{proposition}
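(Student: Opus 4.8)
The plan is to combine Theorem~\ref{thm:main} with the explicit values of the Birkhoff polytope's parameters recorded in Lemma~\ref{lem:assignment_parameters}, and then observe that the integrality of the cost matrix lets us control $\Delta$ from below. First I would note that since the vertices of $\cB$ are permutation matrices and $C \in \ZZ_{\geq 0}^{n \times n}$, the objective value $\langle C, X\rangle$ is an integer at every vertex; hence any suboptimal vertex beats the optimum by at least $1$, i.e.\ $\Delta \geq 1$. (This is exactly the reasoning of Corollary~\ref{cor:integrality}, applied to the Birkhoff polytope.) Then I would plug $R_1 = n$, $R_H = n\log n$, and $\Delta \geq 1$ into the conclusion of Theorem~\ref{thm:main}.

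Concretely, for $\eta \geq R_1 + R_H = n(1 + \log n)$, Corollary~\ref{cor:integrality} gives
\begin{equation*}
\langle C, X^\eta\rangle - \min_{X \in \cB} \langle C, X\rangle \leq \exp\Big(-\frac{\eta}{n} + (1 + \log n)\Big) = e\, n\, e^{-\eta/n}\,.
\end{equation*}
To make the right-hand side at most $\ep$ it suffices to take $\eta/n \geq \log(en/\ep)$, that is $\eta \geq n\log\frac{en}{\ep} = n\big(1 + \log\frac n\ep\big)$, which also automatically satisfies the standing requirement $\eta \geq n(1 + \log n)$ whenever $\ep \leq 1$ (and for $\ep > 1$ the bound is anyway subsumed once we enforce $\eta \geq n(1+\log n)$). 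Thus $\eta = n\big(1 + \log\frac n\ep\big) = O\!\left(n \log\frac n\ep\right)$ suffices, which is the claimed bound.

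The only mild subtlety — and the step I would be most careful about — is bookkeeping around the two constraints on $\eta$: Theorem~\ref{thm:main}'s hypothesis $\eta \geq \frac{R_1 + R_H}{\Delta}$ becomes $\eta \geq n(1+\log n)$ after using $\Delta \geq 1$, and one must check that the value of $\eta$ chosen to force the exponential term below $\ep$ dominates this. Taking $\eta = \max\big\{n(1+\log n),\ n\log\frac{en}{\ep}\big\}$ resolves this cleanly, and since both expressions are $O(n\log\frac n\ep)$ the stated asymptotic holds. No genuine obstacle arises here; the proposition is essentially a direct substitution into Theorem~\ref{thm:main}, with the integrality observation supplying the lower bound on $\Delta$. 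I would therefore keep the proof to a few lines, citing Lemma~\ref{lem:assignment_parameters} for the parameters and Corollary~\ref{cor:integrality} (or Theorem~\ref{thm:main} together with the integrality argument) for the bound.
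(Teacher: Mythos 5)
Your proof is correct and follows essentially the same route as the paper: invoke Lemma~\ref{lem:assignment_parameters} for $R_1 = n$ and $R_H = n\log n$, use integrality of the Birkhoff polytope and of $C$ to get $\Delta \geq 1$ via Corollary~\ref{cor:integrality}, and plug into Theorem~\ref{thm:main} to conclude $\eta = O(n\log\frac{n}{\ep})$. Your extra care about reconciling the two constraints on $\eta$ is a harmless elaboration of the same substitution the paper performs.
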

\begin{proof}
Since the entries of $C$ are integers and the Birkhoff polytope is integral~\citep{Sch03}, Corollary~\ref{cor:integrality} implies $\Delta \geq 1$.
Theorem~\ref{thm:main} implies that as long as $\ep < 1$, we can obtain a solution $\hat X$ such that
\begin{equation*}
\langle C, \hat X \rangle - \min_{X^* \in \cB} \langle C, X^* \rangle \leq \ep
\end{equation*}
by solving the penalized program with $\eta = \frac{R_1}{\Delta}\log \frac \Delta \ep + \frac{R_1 + R_H}{\Delta} \leq n \log \frac 1 \ep + n(1 + \log n)$, which is $O\left(n \log \frac n \ep\right)$.
\end{proof}
Proposition~\ref{prop:assignment_upper} is disappointing: it guarantees exponential convergence of $X^\eta$ to $X^*$ only when $\eta \gtrsim n \log n$, a far cry from the hoped-for result that $\eta$ could be taken $\tilde O(1)$.
We now show that, up to logarithmic factors, this bound is tight.
The following theorem implies that, even when $C \in \{0, 1\}^{n \times n}$, $\langle C, X^\eta \rangle$ can be bounded away from the optimal value if $\eta \ll n$.

\begin{theorem}
Let $C \in \{0, 1\}^{n \times n}$ be the matrix given by
\begin{equation}\label{eq:cost_def}
C_{ij} := \left\{\begin{array}{ll}
0 & \text{ if $j = i$ or $j = i+1$,} \\
1 & \text{ otherwise.}
\end{array}\right.
\end{equation}
If $\eta \leq n \log \frac{1-\ep}{\ep}$,
then
\begin{equation*}
\langle C, X^\eta \rangle - \langle C, X^* \rangle \geq \ep
\end{equation*}
\end{theorem}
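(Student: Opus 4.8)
The plan is to exploit the explicit Sinkhorn form of $X^\eta$ together with a telescoping identity along the ``staircase'' $\{(i,i)\}\cup\{(i,i+1)\}$ of zero-cost entries of $C$. First, since the identity permutation matrix has $\langle C, I\rangle = \sum_i C_{ii} = 0$ and $C\ge 0$, we have $\min_{X\in\cB}\langle C, X\rangle = 0$, so it suffices to prove $\langle C, X^\eta\rangle \ge \ep$. Writing $\delta := e^{-\eta}$, the first-order conditions for the penalized program force $X^\eta$ to be the Sinkhorn scaling of $e^{-\eta C}$, i.e. $X^\eta_{ij} = u_i v_j\,\delta^{C_{ij}}$ for positive vectors $u, v$, where $\delta^{C_{ij}}$ equals $1$ on the staircase and $\delta$ elsewhere. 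Because $C$ has $\{0,1\}$ entries, summing $C_{ij}X^\eta_{ij}$ and using $\sum_{ij}X^\eta_{ij}=n$ gives the clean identity $\langle C, X^\eta\rangle = \tfrac{\delta}{1-\delta}\bigl(UV - n\bigr)$, where $U := \sum_i u_i$ and $V := \sum_j v_j$; so the goal is equivalent to the bound $UV \ge n + \tfrac{\ep(1-\delta)}{\delta}$. Finally, $\eta\mapsto \langle C, X^\eta\rangle$ is nonincreasing (being the derivative of the concave map $\eta\mapsto\min_X\{\eta\langle C,X\rangle - \ent{X}\}$), so it is enough to treat the boundary case $\delta = \bigl(\tfrac{\ep}{1-\ep}\bigr)^n$ (the claim is vacuous when $\ep\ge \tfrac12$), where I argue by contradiction from $\langle C, X^\eta\rangle < \ep$.

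The engine is the following identity. Comparing the identity permutation with the cyclic shift $\sigma$ (set $\sigma(i)=i+1$ for $i<n$ and $\sigma(n)=1$), which together use only zero-cost staircase edges apart from the single cost-$1$ corner $(n,1)$ used by $\sigma$, so that $\sum_i C_{i,\sigma(i)}=1$ while $\sum_i C_{ii}=0$, the product form of $X^\eta$ gives $\prod_i X^\eta_{i,\sigma(i)} = \bigl(\prod_i u_i\bigr)\bigl(\prod_i v_j\bigr)\delta = \delta\prod_i X^\eta_{ii}$, hence
\begin{equation*}
X^\eta_{n,1} \;=\; \delta\,\frac{\prod_{i=1}^n X^\eta_{ii}}{\prod_{i=1}^{n-1}X^\eta_{i,i+1}}\,.
\end{equation*}
Two complementary facts control this quotient. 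On the one hand, $\langle C, X^\eta\rangle<\ep$ bounds the cost-$1$ mass of every line by $\ep$; since row $n$ and column $1$ each contain exactly one zero-cost entry, $X^\eta_{11},X^\eta_{nn}>1-\ep$, and propagating along the staircase keeps every diagonal entry close to $1$ and every super-diagonal entry $X^\eta_{i,i+1}$ small. On the other hand, the $2\times 2$ sub-blocks on rows and columns $\{i,i+1\}$ give $X^\eta_{i+1,i}\,X^\eta_{i,i+1} = \delta\,X^\eta_{ii}\,X^\eta_{i+1,i+1}$, and since $X^\eta_{i+1,i}$ is a cost-$1$ entry, $X^\eta_{i,i+1} = \delta X^\eta_{ii}X^\eta_{i+1,i+1}/X^\eta_{i+1,i} \ge \delta X^\eta_{ii}X^\eta_{i+1,i+1}/\langle C,X^\eta\rangle$ is also bounded below; together these pin each staircase entry to within constant factors of $\delta^{1/n}$. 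Plugging the resulting estimates into the displayed formula (and, for small $n$, adding in the remaining cost-$1$ entries $X^\eta_{i+1,i} = \delta X^\eta_{ii}X^\eta_{i+1,i+1}/X^\eta_{i,i+1}$) forces $\langle C, X^\eta\rangle \ge \ep$ once $\delta = \bigl(\tfrac{\ep}{1-\ep}\bigr)^n$, the desired contradiction.

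The main obstacle is making that last step quantitatively sharp. The crude propagation ``$X^\eta_{ii}>1-i\ep$ and $X^\eta_{i,i+1}<i\ep$'' leaks a $\log n$ factor into the exponent of $\ep$ (through the product $\prod_i i$), which is exactly the gap between the threshold $n\log\tfrac{1-\ep}{\ep}$ asserted here and the upper bound $O(n\log\tfrac n\ep)$ of Proposition~\ref{prop:assignment_upper}; to land the stated constant one must use the rigidity of the Sinkhorn form more fully. Concretely, the relations above say that the dual potentials $\log u_i$ and $\log v_j$ must advance along the staircase in nearly equal increments of size $\approx \eta/n$, so that $u_n$ is of order $e^{(n-1)\eta/n}$ and hence $UV \gtrsim e^{(n-1)\eta/n}$, giving $\langle C, X^\eta\rangle \gtrsim e^{-\eta/n}$; I expect the cleanest rigorous route is to combine the identity $\langle C,X^\eta\rangle = \tfrac{\delta}{1-\delta}(UV-n)$ with a direct analysis of the Sinkhorn fixed-point system $u_i\bigl(\delta V + (1-\delta)(v_i+v_{i+1})\bigr)=1$ and its transpose, rather than the entrywise propagation, in order to get the constant exactly right.
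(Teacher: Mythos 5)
Your ``engine'' is the same as the paper's: writing $X^\eta_{ij}=u_i v_j e^{-\eta C_{ij}}$ and telescoping the ratios of staircase entries along the cycle $\sigma(i)=i+1 \pmod n$ to get $X^\eta_{n,1}=e^{-\eta}\prod_i X^\eta_{ii}\big/\prod_{i=1}^{n-1}X^\eta_{i,i+1}$. But the proof has a genuine gap in the step that turns this identity into the stated threshold, and you correctly diagnose it yourself: your entrywise propagation only yields $X^\eta_{ii}>1-i\ep$ and $X^\eta_{i,i+1}<i\ep$, which after taking logarithms costs a $\sum_i \log i \approx n\log n$ term and lands you at $\eta \gtrsim n\log\frac{1}{n\ep}$ rather than $n\log\frac{1-\ep}{\ep}$. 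The remainder of your writeup (``advance in nearly equal increments of size $\approx \eta/n$,'' ``analyze the Sinkhorn fixed-point system $u_i(\delta V + (1-\delta)(v_i+v_{i+1}))=1$'') is a plan for a fix, not a fix; as written, the argument does not establish the claimed bound.

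The missing idea is a \emph{uniform} bound $X^\eta_{ii}>1-\ep$ holding simultaneously for every $i$, with no propagation loss, and it is obtained not from the scaling structure but from the Birkhoff--von Neumann theorem: write $X^\eta=\lambda_I I+\sum_{\Pi\neq I}\lambda_\Pi\Pi$ as a convex combination of permutation matrices. Since $\langle C,\Pi\rangle\geq 1$ for every permutation $\Pi\neq I$, the assumption $\langle C,X^\eta\rangle<\ep$ forces $\lambda_I>1-\ep$, hence $X^\eta_{ii}\geq\lambda_I>1-\ep$ for \emph{all} $i$ at once, and then the row sums give $X^\eta_{i,i+1}<\ep$ and $X^\eta_{n,1}<\ep$ for free. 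Plugging these into your telescoping identity immediately gives $\eta>n\log\frac{1-\ep}{\ep}$ with no $\log n$ leakage. Your side observations (the identity $\langle C,X^\eta\rangle=\tfrac{\delta}{1-\delta}(UV-n)$ and the monotonicity of $\eta\mapsto\langle C,X^\eta\rangle$) are both correct but are not used anywhere to close the argument; the Birkhoff decomposition is the step you need.
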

\begin{proof}
The matrix defied in~\eqref{eq:cost_def} admits the unique optimum solution $X^* = I$, the identity matrix, and optimal value $\langle C, I \rangle = 0$.
For any permutation $\Pi \neq I$, on the other hand, $\langle C, \Pi \rangle \geq 1$.

We prove the contrapositive.
By the Birkhoff-von Neumann theorem, we can write $X^\eta$ as a convex combination of permutation matrices:
\begin{equation*}
X^\eta = \lambda_I I + \sum_{\Pi \neq I} \lambda_\Pi \Pi\,.
\end{equation*}
By assumption, $\langle C, X^\eta \rangle < \ep$, so $\lambda_I > 1 - \ep$.
This implies that $X^\eta_{ii} > 1-\ep$ for $1 \leq i \leq n$, and therefore that $X^\eta_{i, i+1} < \ep$ for $1 \leq i \leq n-1$.

Sinkhorn's theorem~\citep{Sin67} combined with first-order optimality conditions for the penalized program guarantee that $X^\eta = D_1 A D_2$ for positive diagonal matrices $D_1$ and $D_2$ and $A_{ij} := \exp(- \eta C_{ij})$.
Write $d$ for the vector of diagonal entries of $D_2$.
For $1 \leq i \leq n-1$, we have
\begin{equation}\label{eq:i_leq_n-1}
\frac{d_i}{d_{i+1}} = \frac{A_{i,i}}{A_{i, i+1}}\frac{d_i}{d_{i+1}} = \frac{X^\eta_{i,i}}{X^\eta_{i, i+1}} > \frac{1-\ep}{\ep}\,.
\end{equation}

Finally, we note that
\begin{equation*}
\frac{X^\eta_{n, n}}{X^\eta_{n, 1}} = \frac{A_{n,n}}{A_{n, 1}}\frac{d_n}{d_{1}} = \exp(\eta) \frac{d_n}{d_{1}}\,,
\end{equation*}
and since $X^\eta_{n, 1} < \ep$, we obtain
\begin{equation}\label{eq:i_eq_n}
\frac{d_n}{d_1} > \exp(-\eta) \frac{1-\ep}{\ep}\,.
\end{equation}

Combining~\eqref{eq:i_leq_n-1} for $1 \leq i \leq n-1$ with~\eqref{eq:i_eq_n} yields
\begin{equation*}
0 > n \log \frac{1-\ep}{\ep} - \eta\,,
\end{equation*}
which implies
\begin{equation*}
\eta > n \log \frac{1-\ep}{\ep}\,,
\end{equation*}
as claimed.
\end{proof}

\section{Conclusions}
Our focus in this work has been on making explicit the asymptotic analysis of~\citet{ComSan94}.
Their paper has been cited consistently in the computational optimal transport community as giving the best account of the speed of convergence of the penalized program to the original linear program~\citep[see][]{GenCutPey16,BenCarCut15,BenCarNen16,BloSegRol17,CarDuvPey17,DenPetSch14,DesPapRou16,Di-GerNen17,DiaRauRiv15,GenCutPey16,Sch16,PeyCut17,LuiRudPon18}.
We hope that the simple and explicit proof here will clarify the nature of the exponential rate proved in~\citet{ComSan94}, and provide a framework for a more refined analysis of the entropic penalty for linear programs of interest.

One puzzle that remains is to give theoretical justification to the observation of~\citet{Cut13} that small values of $\eta$ achieve good accuracy on real-world optimal transport data.
It is clear that the analysis of Theorem~\ref{thm:main} could be improved via a more refined understanding of the ``energy spectrum'' of optimal transport (i.e., the size and structure of the set of nearly-optimal transports), but obtaining this understanding even in the case where the costs are i.i.d.\ random variables is a very deep question~\citep{Ald01}. We leave obtaining a more sophisticated grasp on the behavior of this trajectory for future work.

\section{Proofs of Lemmas}\label{sec:lemmas}
\begin{proof}[Proof of Lemma~\ref{lem:weak_convexity}]
Write $\bar \lambda = 1 - \lambda$, and let $R = \max\{\|x\|_1, \|y\|_1\}$.
\begin{align*}
\ent{\lambda x + \bar \lambda y} & = \sum_i (\lambda x_i + \bar \lambda y_i) \left(\log \frac{1}{\lambda x_i + \bar \lambda y_i}\right) \\
& = \sum_i \lambda x_i \left(\log \frac{1}{\lambda x_i + \bar \lambda y_i}\right) + \sum_i \bar \lambda y_i \left(\log \frac{1}{\lambda x_i + \bar \lambda y_i}\right) \\
& \leq \sum_i \lambda x_i \left(\log \frac{1}{\lambda x_i}\right) + \sum_i \bar \lambda y_i \left(\log \frac{1}{\bar \lambda y_i}\right) \\
& = \lambda \ent{x} + \bar \lambda \ent{y} + \lambda \log \frac 1 \lambda \sum_i x_i + \bar \lambda \log \frac{1}{\bar \lambda} \sum_i y_i \\
& \leq \lambda \ent{x} + \bar \lambda \ent{y} + Rh(\lambda)\,.
\end{align*}
\end{proof}
\begin{proof}[Proof of Lemma~\ref{lem:monotone}]
The derivative of $f$ satisfies
\begin{equation*}
f'(\lambda) = \alpha \log \frac{1-\lambda}{\lambda} + \beta\,.
\end{equation*}
When $\lambda \in (0, \frac{\beta}{\alpha + \beta}]$,
\begin{equation*}
\log \frac{1-\lambda}{\lambda} \geq - \log \frac{\beta}{\alpha} > - \frac{\beta}{\alpha}\,,
\end{equation*}
so $f'(\lambda) > 0$.
The claim follows.
\end{proof}

\begin{proof}[Proof of Lemma~\ref{lem:binary_entropy_bound}]
By definition $h(\rho) = \rho \log \frac 1 \rho + (1-\rho) \log \frac{1}{1-\rho}$, so it suffices to show that
\begin{equation*}
\frac{(1-\rho)}{\rho} \log \frac{1}{1-\rho} \leq 1 \quad \quad \forall \rho \in [0, 1]\,.
\end{equation*}
This inequality is easily verified by noting that the derivative of the left side is nonpositive on $(0, 1)$ and $\lim_{\rho \to 0^+} \frac{(1-\rho)}{\rho} \log \frac{1}{1-\rho} = 1$.
\end{proof}

\begin{proof}[Proof of Lemma~\ref{lem:assignment_parameters}]
It is trivial to see that all $X \in \cB$ satisfy $\sum_{ij} X_{ij} = n$, so $R_1 = n$.
For any $X \in \cB$,
\begin{align*}
\ent{X} & = \sum_{ij} X_{ij} \log \frac{1}{X_{ij}} \\
& = \sum_i \sum_j X_{ij} \log \frac{1}{X_{ij}} \\
& = \sum_i \ent{X_i}\,,
\end{align*}
where $X_i$ denotes the $i$th row of $X$.
Since each row of $X$ is a nonnegative vector of dimension $n$ whose entries sum to $1$, for each $1 \leq i \leq n$ the bound $0 \leq H(X_i) \leq \log n$ holds.
Therefore $0 \leq \ent{X} \leq n \log n$ for all $X \in \cB$, which proves that $R_H \leq n \log n$.
\end{proof}

\section{Acknowledgments}
This work was supported in part by NSF Graduate Research Fellowship DGE-1122374. The author would like to thank J.~Altschuler and P.~Rigollet for useful discussions, as well as the anonymous referees for their suggestions.

\bibliographystyle{plainnat}
\bibliography{Weed18}
\end{document}